\numberwithin{equation}{section}
\newcommand{\R}{\mathbb{R}}
\newcommand{\N}{\mathbb{N}}
\newcommand{\Z}{\mathbb{Z}}
\newcommand{\s}{\sharp}
\DeclareMathOperator{\dive}{div}
\newtheorem{lem}{Lemma}
\newtheorem{thm}{Theorem}
\newtheorem{defn}{Definition} 
\theoremstyle{remark}
\newtheorem{remark}{Remark}
\begin{document}

\title[periodic solutions for a fractional Schr\"odinger equation]{periodic solutions for a superlinear fractional problem without the Ambrosetti-Rabinowitz condition}
\author[Vincenzo Ambrosio]{Vincenzo Ambrosio}

\address{%
Dipartimento di Matematica e Applicazioni\\ Universit\`a degli Studi "Federico II" di Napoli\\
via Cinthia, 80126 Napoli, Italy}

\email{vincenzo.ambrosio2@unina.it}

%\subjclass{35J60, 35Q55, 35S05}

\keywords{Periodic solutions, Extension method, Linking Theorem, Cerami condition}

\date{}
\begin{abstract}
The purpose of this paper is to study $T$-periodic solutions to 
\begin{equation}\label{NP}
\left\{
\begin{array}{ll}
[(-\Delta_{x}+m^{2})^{s}-m^{2s}]u=f(x,u) &\mbox{ in } (0,T)^{N}   \\
u(x+Te_{i})=u(x)    &\mbox{ for all } x \in \R^{N}, i=1, \dots, N
\end{array}
\right.
\end{equation}
where $s\in (0,1)$, $N>2s$, $T>0$, $m> 0$ and $f(x,u)$ is a continuous function, $T$-periodic in $x$ and satisfying a suitable growth assumption weaker than the Ambrosetti-Rabinowitz condition. \\
The nonlocal operator $(-\Delta_{x}+m^{2})^{s}$ can be realized as the Dirichlet to Neumann map for a degenerate  elliptic problem posed on the half-cylinder $\mathcal{S}_{T}=(0,T)^{N}\times (0,\infty)$. By using a variant of the Linking Theorem, we show that the extended problem in $\mathcal{S}_{T}$ admits a nontrivial solution $v(x,\xi)$ which is $T$-periodic in $x$. Moreover, by a procedure of limit as $m\rightarrow 0$, we also prove the existence of a nontrivial solution to (\ref{NP}) with $m=0$.
%As far as we know, this result is new.
\end{abstract}

%\begin{keyword}
%Nonlocal operators \sep Periodic solutions \sep Linking Theorem
%\end{keyword}

\maketitle

\section{Introduction}
\noindent
The aim of this paper is to investigate the existence of periodic solutions to the problem 
%In this paper we want to continue the study of the existence of periodic solutions to the problem 
\begin{equation}\label{P}
\left\{
\begin{array}{ll}
[(-\Delta_{x}+m^{2})^{s}-m^{2s}]u=f(x,u) &\mbox{ in } (0,T)^{N}   \\
u(x+Te_{i})=u(x)    &\mbox{ for all } x \in \R^{N}, \, i=1, \dots, N
\end{array},
\right.
\end{equation}
%started in \cite{A2, A3}. 
where $s\in (0,1)$, $N >2s$, $m\geq 0$, $(e_{i})$ is the canonical basis in $\R^{N}$ and the nonlinearity $f:\R^{N}\times \R \rightarrow \R$ is a superlinear continuous function.
The operator $\displaystyle{(-\Delta_{x}+m^{2})^{s}}$ appearing in (\ref{P}), is defined through the spectral decomposition, by using the powers of the eigenvalues of $-\Delta_{x}+m^{2}$ with periodic boundary conditions.\\
Let $u\in\mathcal{C}^{\infty}_{T}(\R^{N})$, that is $u$ is infinitely differentiable in $\R^{N}$ and $T$-periodic in each variable.
Then $u$ has a Fourier series expansion:
$$
u(x)=\sum_{k\in \Z^{N}} c_{k} \frac{e^{\imath \omega k\cdot x}}{{\sqrt{T^{N}}}} \quad (x\in \R^{N})
$$
where 
$$
 \omega=\frac{2\pi}{T} \,\mbox{ and } \, c_{k}=\frac{1}{\sqrt{T^{N}}} \int_{(0,T)^{N}} u(x)e^{- \imath \omega k \cdot x}dx \quad (k\in \Z^{N})
$$
are the Fourier coefficients of $u$.
The operator $(-\Delta_{x}+m^{2})^{s}$ is defined by setting 
\begin{equation*}\label{nfrls}
(-\Delta_{x}+m^{2})^{s} \,u=\sum_{k\in \Z^{N}} c_{k} (\omega^{2}|k|^{2}+m^{2})^{s} \, \frac{e^{\imath \omega k\cdot x}}{{\sqrt{T^{N}}}}.
\end{equation*}

For  $\displaystyle{u=\sum_{k\in \Z^{N}} c_{k} \frac{e^{\imath \omega k\cdot x}}{{\sqrt{T^{N}}}}}$ and $\displaystyle{v=\sum_{k\in \Z^{N}} d_{k} \frac{e^{\imath \omega k\cdot x}}{{\sqrt{T^{N}}}}}$, we have that 
$$
\mathcal{Q}(u,v)=\sum_{k\in \Z^{N}} (\omega^{2}|k|^{2}+m^{2})^{s} c_{k} \bar{d}_{k}
$$
can be extended by density to a quadratic form on the Hilbert space 
$$
\mathbb{H}^{s}_{m,T}=\Bigl\{u=\sum_{k\in \Z^{N}} c_{k} \frac{e^{\imath \omega k\cdot x}}{{\sqrt{T^{N}}}}\in L^{2}(0,T)^{N}: \sum_{k\in \Z^{N}} (\omega^{2}|k|^{2}+m^{2})^{s} \, |c_{k}|^{2}<\infty \Bigr\}
$$
endowed with the norm
$$
|u|_{\mathbb{H}^{s}_{m,T}}=\sqrt{\sum_{k\in \Z^{N}} (\omega^{2}|k|^{2}+m^{2})^{s} |c_{k}|^{2}}.
$$
When $m=1$ we set $\displaystyle{\mathbb{H}^{s}_{T}=\mathbb{H}^{s}_{1,T}}$. \\
We would remind that in the last decade a great attention has been devoted to the study of fractional Sobolev spaces and 
non-local operators. In fact such operators arise in several fields such as optimization, finance, phase transitions, stratified materials, anomalous diffusion, crystal dislocation, flame propagation, conservation laws, ultra-relativistic limits of quantum mechanics, quasi-geostrophic flows, minimal surfaces and water waves; see for instance \cite{Applebaum}, \cite{BKW}, \cite{Cabsolmor}, \cite{CVal}, \cite{CT}, \cite{FJL}, \cite{LY1}, \cite{Silvestre}, \cite{SirVal}, \cite{Stoker} and references therein.

\smallskip

\noindent
Now we state the assumptions on the nonlinear term $f$ in (\ref{P}). Along the paper, we will suppose that $f:\R^{N}\times \R\rightarrow \R$ verifies the following conditions:
\begin{compactenum}[($f1$)]
\item $f(x,t)$ is $T$-periodic in $x \in \R^{N}$, that is $f(x+Te_{i},t)=f(x,t)$ for any $x\in \R^{N}$, $t\in \R$ and $i=1,\dots, N$;
\item $f$ is continuous in $\R^{N+1}$; 
\item $f(x,t)=o(t)$ as $t \rightarrow 0$ uniformly in $x\in \R^{N}$;
\item there exists $\displaystyle{1<p<2^{\s}_{s}-1=\frac{2N}{N-2s}-1}$ and $C>0$ such that
$$
|f(x,t)|\leq C(1+|t|^{p})
$$
for any $x\in \R^{N}$ and $t\in \R$;
\item $\displaystyle{\lim_{|t|\rightarrow \infty} \frac{F(x,t)}{|t|^{2}}=+\infty}$ uniformly for any $x\in \R^{N}$.
Here $\displaystyle{F(x,t)=\int_{0}^{t} f(x,\tau) d\tau}$;
\item There exists $\gamma\geq 1$ such that for any $x\in \R^{N}$
$$
G(x,\theta t)\leq \gamma \, G(x,t) \mbox{ for any } x\in \R^{N},  t\in \R \mbox{ and } \theta\in [0,1],
$$
where $\displaystyle{G(x,t):=f(x,t)t-2F(x,t)}$.
\end{compactenum}

\noindent
Our first main result is the following 
\begin{thm}\label{thm1}
Let $m>0$ and let us assume that $f:\R^{N+1} \rightarrow \R$ is a function satisfying the assumptions $(f1)-(f6)$.
Then there exists a solution $u\in \mathbb{H}^{s}_{m,T}$ to (\ref{P}).
In particular $u\in \mathcal{C}^{0,\alpha}([0,T]^{N})$ for some $\alpha \in (0,1)$.
\end{thm}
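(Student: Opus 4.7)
The strategy is to use the Caffarelli--Silvestre type extension to convert (\ref{P}) into a local degenerate elliptic problem on the half-cylinder $\mathcal{S}_{T}$, and then to apply a variant of the Linking Theorem suited to the Cerami condition. For $u\in\mathbb{H}^{s}_{m,T}$ the extension $v=\ext(u)$ is the unique $T$-periodic-in-$x$ solution of $-\dive(\xi^{1-2s}\nabla v)+m^{2}\xi^{1-2s}v=0$ on $\mathcal{S}_{T}$ with trace $u$, and satisfies $-\lim_{\xi\to 0^{+}}\xi^{1-2s}\partial_{\xi}v=\kappa_{s}(-\Delta_{x}+m^{2})^{s}u$. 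Thus $T$-periodic weak solutions to (\ref{P}) correspond to critical points of
\begin{equation*}
J_{m}(v)=\frac{1}{2\kappa_{s}}\int_{\mathcal{S}_{T}}\xi^{1-2s}(|\nabla v|^{2}+m^{2}v^{2})\,dx\,d\xi-\frac{m^{2s}}{2}\int_{(0,T)^{N}}v(x,0)^{2}\,dx-\int_{(0,T)^{N}}F(x,v(x,0))\,dx
\end{equation*}
on a weighted Hilbert space $\mathbb{X}^{s}_{m,T}$ of $T$-periodic functions on $\mathcal{S}_{T}$.

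\textbf{Linking geometry.} Expanded in Fourier modes, the quadratic part of $J_{m}$ evaluated on an extension reads $\tfrac{1}{2}\sum_{k\in\Z^{N}}[(\omega^{2}|k|^{2}+m^{2})^{s}-m^{2s}]|c_{k}|^{2}$, which vanishes precisely when the trace is constant. Let $Y$ be the one-dimensional subspace of (extensions of) constants and $Z$ its complement on which the quadratic form is positive definite. Combining $(f3)$--$(f4)$ (so $F(x,t)\leq\varepsilon t^{2}+C_{\varepsilon}|t|^{p+1}$) with the trace embedding $\mathbb{H}^{s}_{m,T}\hookrightarrow L^{p+1}((0,T)^{N})$ gives constants $\alpha,\rho>0$ with $J_{m}\geq\alpha$ on $\{v\in Z:\|v\|=\rho\}$. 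Fixing $e\in Z$ with $e(\cdot,0)\not\equiv 0$ and forming the box $\mathcal{Q}_{R_{1},R_{2}}=\{y+te:y\in Y,\,\|y\|\leq R_{1},\,0\leq t\leq R_{2}\}$, $(f5)$ (superquadratic $F$) forces $J_{m}\leq 0$ on $\partial\mathcal{Q}_{R_{1},R_{2}}$ for $R_{1},R_{2}$ large. Since $\partial\mathcal{Q}_{R_{1},R_{2}}$ links the sphere in $Z$, a Cerami-type Linking Theorem yields a sequence $(v_{n})\subset\mathbb{X}^{s}_{m,T}$ with $J_{m}(v_{n})\to c\geq\alpha$ and $(1+\|v_{n}\|)\|J_{m}'(v_{n})\|\to 0$.

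\textbf{Boundedness via $(f6)$: the main obstacle.} Since the Ambrosetti--Rabinowitz condition is replaced by the weaker pair $(f5)$--$(f6)$, showing that $(v_{n})$ is bounded is the hardest step. I adapt the Jeanjean--Liu argument: suppose $\|v_{n}\|\to\infty$ and set $w_{n}=v_{n}/\|v_{n}\|$, $u_{n}=v_{n}(\cdot,0)$, $\sigma_{n}=w_{n}(\cdot,0)$. Up to subsequence, $w_{n}\rightharpoonup w$ and, by compactness of the trace embedding on $(0,T)^{N}$, $\sigma_{n}\to\sigma=w(\cdot,0)$ in $L^{q}$ for $q<2^{\s}_{s}$. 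If $\sigma\not\equiv 0$, then $|u_{n}|\to\infty$ on a set of positive measure, and $(f5)$ plus Fatou's lemma yields $\int F(x,u_{n})/\|v_{n}\|^{2}\to\infty$, contradicting that $J_{m}(v_{n})/\|v_{n}\|^{2}$ is bounded above. If $\sigma\equiv 0$, choose $\theta_{n}\in[0,1]$ realizing $\max_{\theta\in[0,1]}J_{m}(\theta v_{n})$. For any $R>0$, the choice $\theta=R/\|v_{n}\|$ is eventually in $[0,1]$, and since the trace $R\sigma_{n}\to 0$ in $L^{2}\cap L^{p+1}$, $(f3)$--$(f4)$ yield $\int F(x,R\sigma_{n})\to 0$, so $J_{m}(\theta_{n}v_{n})\geq J_{m}(\theta v_{n})\to R^{2}/(2\kappa_{s})$; as $R$ is arbitrary, $J_{m}(\theta_{n}v_{n})\to+\infty$. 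Consequently $\theta_{n}\in(0,1)$ eventually, so $J_{m}'(\theta_{n}v_{n})[\theta_{n}v_{n}]=0$ and, using $(f6)$,
\begin{equation*}
J_{m}(\theta_{n}v_{n})=\tfrac{1}{2}\int_{(0,T)^{N}}G(x,\theta_{n}u_{n})\,dx\leq\tfrac{\gamma}{2}\int_{(0,T)^{N}}G(x,u_{n})\,dx=\gamma\Bigl(J_{m}(v_{n})-\tfrac{1}{2}J_{m}'(v_{n})[v_{n}]\Bigr)\to\gamma c,
\end{equation*}
since $|J_{m}'(v_{n})[v_{n}]|\leq\|J_{m}'(v_{n})\|\|v_{n}\|\to 0$ by the Cerami property. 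This contradicts $J_{m}(\theta_{n}v_{n})\to+\infty$.

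\textbf{Passage to the limit and regularity.} With $(v_{n})$ bounded I extract $v_{n}\rightharpoonup v$; the compact trace embedding together with $(f4)$ allows passage to the limit in $J_{m}'(v_{n})\to 0$, giving $J_{m}'(v)=0$, and $J_{m}(v)=c\geq\alpha>0$ ensures $v\not\equiv 0$. Finally, the subcritical growth $(f4)$ combined with a Moser iteration adapted to the weighted extension problem yields $u=v(\cdot,0)\in L^{\infty}$, and standard Hölder estimates for the degenerate elliptic equation of Caffarelli--Silvestre type deliver $u\in\mathcal{C}^{0,\alpha}([0,T]^{N})$ for some $\alpha\in(0,1)$.
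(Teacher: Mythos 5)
Your proposal is correct and follows essentially the same route as the paper: Caffarelli--Silvestre extension to the half-cylinder, the splitting into extensions of constants plus the zero-mean-trace complement, the Li--Wang linking theorem with the Cerami condition, and the Jeanjean--Liu dichotomy argument (using $(f5)$, $(f6)$ and the compact trace embedding) to bound Cerami sequences. The only implicit ingredient worth making explicit is that $F\geq 0$ (derived from $(f3)$ and $(f6)$), which is needed both for $\mathcal{J}_{m}\leq 0$ on the finite-dimensional part and to apply Fatou's lemma in the case $\sigma\not\equiv 0$.
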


%One of the main difficulties in the study of problem (\ref{P}) is the nonlocalicity of the operator $(-\Delta_{x}+m^{2})^{s}$. To overcome this difficulty

\noindent
To study the problem (\ref{P}), we will make use of a Caffarelli-Silvestre type-extension in the periodic framework (see \cite{A2, A3}). 
This method, which has been originally introduced in \cite{CafSil} to investigate the fractional Laplacian in $\R^{N}$, is very useful because it allows us to reformulate the non-local problem (\ref{P}) in terms of a local degenerate elliptic problem with a Neumann boundary condition in one dimension higher.

%To study the problem (\ref{P}), we will make use, in the spirit of \cite{A2, A3}, of a Caffarelli-Silvestre type-extension \cite{CabTan, CafSil, CapDavDupSir} in the periodic setting.This tool is very useful because it allows us to reformulate the non-local problem (\ref{P}) %in terms of a local degenerate elliptic problem with a Neumann boundary condition in one dimension higher.

%and to use known variational techniques for this kind of problem.
%in a degenerate elliptic equation with a Neumann boundary condition in one dimension higher and this permits to use the known variational techniques for these kind of problem.

More precisely, for $u\in \mathbb{H}^{s}_{m,T}$ one considers the problem
\begin{equation*}
\left\{
\begin{array}{ll}
-\dive(\xi^{1-2s} \nabla v)+m^{2}\xi^{1-2s}v =0 &\mbox{ in }\mathcal{S}_{T}:=(0,T)^{N} \times (0,\infty)  \\
v_{| {\{x_{i}=0\}}}= v_{| {\{x_{i}=T\}}} & \mbox{ on } \partial_{L}\mathcal{S}_{T}:=\partial (0,T)^{N} \times [0,\infty) \\
v(x,0)=u(x)  &\mbox{ on }\partial^{0}\mathcal{S}_{T}:=(0,T)^{N} \times \{0\}
\end{array},
\right.
\end{equation*}
from where the operator $(-\Delta_{x}+m^{2})^{s}$ is obtained as 
$$
-\lim_{\xi\rightarrow 0} \xi^{1-2s} \frac{\partial v}{\partial \xi}(x,\xi) = \kappa_{s} (-\Delta_{x} + m^{2})^{s} u(x), 
$$
in distributional sense and $\displaystyle{\kappa_{s}:= 2^{1-2s} \frac{\Gamma(1-s)}{\Gamma(s)}}$.\\
We will exploit this fact and we will look for solutions to
\begin{equation}\label{R}
\left\{
\begin{array}{ll}
-\dive(\xi^{1-2s} \nabla v)+m^{2}\xi^{1-2s}v =0 &\mbox{ in }\mathcal{S}_{T}\!:=\!(0,T)^{N}\!\times\!(0,\infty)  \\
v_{| {\{x_{i}=0\}}}= v_{| {\{x_{i}=T\}}} & \mbox{ on } \partial_{L}\mathcal{S}_{T}\!:=\!\partial (0,T)^{N} \!\times \![0,\infty) \\
\frac{\partial v}{\partial \nu^{1-2s}}=\kappa_{s} [m^{2s}v+f(x,v)]   &\mbox{ on }\partial^{0}\mathcal{S}_{T}\!:=\!(0,T)^{N} \!\times\! \{0\}
\end{array},
\right.
\end{equation}
where 
$$
\frac{\partial v}{\partial \nu^{1-2s}}:=-\lim_{\xi \rightarrow 0} \xi^{1-2s} \frac{\partial v}{\partial \xi}(x,\xi)
$$
is the conormal exterior derivative of $v$.\\
Since (\ref{R}) has a variational nature, its solutions can be found as critical points of the functional 
 $$
 \mathcal{J}_{m}(v)=\frac{1}{2} ||v||_{\mathbb{X}_{m,T}^{s}}^{2}-\frac{m^{2s}\kappa_{s}}{2}|v(\cdot,0)|_{L^{2}(0,T)^{N}}^{2} -\kappa_{s}\int_{\partial^{0}\mathcal{S}_{T}} F(x,v) \,dx
 $$
defined on the space $\mathbb{X}_{m,T}^{s}$, which is the closure of the set of smooth and $T$-periodic (in $x$) functions in $\R^{N+1}_{+}$ with respect to the norm
$$
||v||_{\mathbb{X}_{m,T}^{s}}:=\sqrt{\iint_{\mathcal{S}_{T}} \xi^{1-2s} (|\nabla v|^{2}+m^{2s} v^{2}) dx d\xi}\,.
$$ 
Under the assumptions $(f1)-(f6)$ we are able to prove that for any $m>0$ fixed, $\mathcal{J}_{m}$ has a Linking geometry. 
We recall that in \cite{A2, A3} we proved that when $f$ satisfies $(f1)-(f4)$, $tf(x,t)\geq 0$ in $\R^{N+1}$ and the Ambrosetti-Rabinowitz condition \cite{AR}, i.e.
\begin{equation}
\exists \, \mu >2, \, R>0 : 0<\mu F(x,t) \leq f(x,t)t, \, \forall \, |t|\geq R,  \tag{AR}
\end{equation}
then we can obtain a nontrivial solution to (\ref{R}) by applying the standard Linking Theorem \cite{Rab, Struwe, Willem}.
This is due to the fact that (AR) guarantees the boundedness of Palais-Smale sequences for the functional associated with the problem under consideration.
%Let us observe that (AR) gives information about the behavior of the nonlinearity at infinity.
However, although (AR) is a quite natural condition when we have to deal with superlinear elliptic problems, it is somewhat restrictive. 
%and eliminates many nonlinearities.
In fact, by a direct integration of (AR), we can deduce the existence of $A,B>0$ such that
$$
F(x,t)\geq A|t|^{\mu}-B \mbox{ for any } t\in \R,
$$
which implies, being $\mu>2$, the condition $(f5)$.
%$$
%\displaystyle{\lim_{|t|\rightarrow \infty} \frac{F(x,t)}{|t|^{2}}=+\infty} \mbox{ uniformly for any } x\in \R^{N}
%$$
If we consider the function $f(x,t)=t\log(1+|t|)$, then it is easy to prove that $f$ verifies $(f5)$ but it does not verify (AR). 
This means that that there are functions which are superlinear at infinity, but do not satisfy (AR).
%Nevertheless, there are many functions which are superlinear at infinity, but do not satisfy (AR) as for example $f(x,t)=t\log(1+|t|)$.
For this reason, in several works concerning superlinear problems of the type
\begin{equation}\label{Ellpb}
\left\{
\begin{array}{ll}
\mathcal{L} u=f(x,u) &\mbox{ in }\Omega  \\
u=0 &\mbox{ on }  \partial \Omega
\end{array},
\right.
\end{equation}
where $\mathcal{L}$ is a second order elliptic operator and $\Omega\subset \R^{N}$ is a smooth bounded domain, some authors tried to drop the condition (AR); see for instance \cite{CM,J,Liu, MS, SZ} and references therein.\\
%To overcome this difficulty, many authors tried to drop the condition (AR); see for instance \cite{CM,J,Liu, MS, SZ} and references therein.\\
%To overcome this difficulty, many authors tried to substitute (AR) with other weaker conditions; see for instance \cite{CM,J,Liu, MS, SZ} and references therein.\\
In this paper, we claim to study the non-local counterpart of (\ref{Ellpb}) with $\mathcal{L}=-\Delta+m^{2}$, in periodic setting, without assuming (AR). Our basic assumptions on the nonlinearity $f$ are $(f1)-(f6)$. We recall that the hypothesis $(f6)$ was introduced for the first time by Jeanjean in \cite{J} to show the existence of a bounded Palais-Smale sequence for functionals having a Mountain Pass structure
$$
I(u)=\frac{1}{2}||u||^{2}_{H^{1}(\R^{N})}-\int_{\R^{N}} F(x,u) dx
$$
and $f(x,t)=\partial_{t}F(x,t)$ does not satisfy (AR).\\
\noindent 
Here, in order to study the problem (\ref{R}), we invoke a variant of the Linking Theorem proved by Li and Wang in \cite{LW}, in which the Palais-Smale condition is replaced by the Cerami condition \cite{Cerami}; namely any  Cerami sequence $\{v_{j}\}$ in $\mathbb{X}_{m,T}^{s}$ at the level $\alpha \in \R$, that is  such that
%Now, to study our problem, we invoke a variant of the Linking Theorem proved by Li and Wang in \cite{LW}, in which the Palais-Smale condition is replaced by the Cerami condition \cite{Cerami}; namely any sequence $\{v_{j}\}\subset \mathbb{X}_{m,T}^{s}$ such that 
$$
\mathcal{J}_{m}(v_{j})\rightarrow \alpha \, \mbox{ and }\, ||\mathcal{J}'_{m}(v_{j})||_{(\mathbb{X}_{m,T}^{s})^{*}}(1+||v_{j}||_{\mathbb{X}_{m,T}^{s}})\rightarrow 0 \, \mbox{ as } \, j\rightarrow \infty,
$$
admits a convergent subsequence.
At this point, to get the existence of a weak solution to (\ref{P}), it will be sufficient to show that every Cerami sequence is bounded and that it possesses a convergent subsequence. 
This step will constitute the heart of the proof of Theorem \ref{thm1}.

\noindent
Finally, we will also consider the problem (\ref{P}) when $m=0$, that is  
\begin{equation}\label{P'}
\left\{
\begin{array}{ll}
(-\Delta_{x})^{s} u=f(x,u) &\mbox{ in } (0,T)^{N}   \\
u(x+Te_{i})=u(x)    &\mbox{ for all } x \in \R^{N}, \quad i=1, \dots, N
\end{array}.
\right.
\end{equation}
By passing to the limit in (\ref{R}) as $m\rightarrow 0$, we prove the existence of a nontrivial periodic solution to (\ref{P'}). A such procedure of limit is justified by the fact that we are able to estimate from below and from above, the critical levels $\alpha_{m}$ of the functionals $\mathcal{J}_{m}$ independently of $m$, provided that $m$ is sufficiently small.  
Therefore our second result can be stated as follows
\begin{thm}\label{thm2}
Let $f:\R^{N+1} \rightarrow \R$ satisfying the assumptions $(f1)-(f6)$. Then the problem (\ref{P'}) admits a 
nontrivial solution $u\in \mathbb{H}^{s}_{T}$. 
%\cap \mathcal{C}^{0,\alpha}([0,T]^{N})$ for some $\alpha \in (0,1)$.
\end{thm}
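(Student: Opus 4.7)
The plan is to construct, for each sufficiently small $m>0$, a critical point $v_{m}\in \mathbb{X}^{s}_{m,T}$ of $\mathcal{J}_{m}$ via Theorem \ref{thm1}, at the minimax level $\alpha_{m}$ arising from the Linking geometry, and then to extract a limit $v_{0}$ as $m\to 0$ which solves the extended problem (\ref{R}) with $m=0$. The trace $u_{0}:=v_{0}(\cdot,0)$ will be the desired nontrivial element of $\mathbb{H}^{s}_{T}$ solving (\ref{P'}).

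The first key step is two-sided $m$-independent control on $\alpha_{m}$. I would verify that the Linking set in the minimax definition of $\alpha_{m}$ can be chosen as a fixed bounded reference set $Q\subset \mathbb{X}^{s}_{0,T}$ that does not depend on $m$ (for example, a ball in the span of finitely many low-frequency modes, plus a positive direction). Since $\|v\|_{\mathbb{X}^{s}_{0,T}}\le \|v\|_{\mathbb{X}^{s}_{m,T}}$ and the $L^{2}$-trace is continuous uniformly in $m\in (0,1]$, one obtains
$$
\mathcal{J}_{m}(v)\le \mathcal{J}_{0}(v)+C m^{2s}\quad \text{on } Q,
$$
giving the uniform upper bound $\alpha_{m}\le \overline{\alpha}$. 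The Linking separation estimate, combined with $(f3)$, $(f4)$ and the compact trace embedding $\mathbb{X}^{s}_{m,T}\hookrightarrow L^{p+1}((0,T)^{N})$ (whose constants can be taken uniform for $m\in [0,1]$), then yields a lower bound $\alpha_{m}\ge \underline{\alpha}>0$.

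Next, using $\mathcal{J}_{m}(v_{m})=\alpha_{m}$ and $\mathcal{J}_{m}'(v_{m})=0$, the identity
$$
\alpha_{m}=\mathcal{J}_{m}(v_{m})-\tfrac{1}{2}\mathcal{J}_{m}'(v_{m})[v_{m}]=\tfrac{\kappa_{s}}{2}\int_{\partial^{0}\mathcal{S}_{T}} G(x,v_{m})\, dx
$$
together with $(f6)$ and the Jeanjean-type argument developed in Theorem \ref{thm1} for bounding Cerami sequences (applied now with $\alpha_{m}\le \overline{\alpha}$ in place of a single Cerami level), produces the uniform bound $\|v_{m}\|_{\mathbb{X}^{s}_{m,T}}\le C$ for all $m\in (0,1]$. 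Thus $\{v_{m}\}$ is bounded in $\mathbb{X}^{s}_{0,T}$, and up to a subsequence $v_{m}\rightharpoonup v_{0}$ weakly in $\mathbb{X}^{s}_{0,T}$, with $v_{m}(\cdot,0)\to v_{0}(\cdot,0)$ strongly in $L^{q}((0,T)^{N})$ for every $q\in[1,2^{\s}_{s})$ and a.e., by the compact trace embedding. Passing to the limit in the weak formulation of (\ref{R}), the $m^{2}$ bulk term and the $m^{2s}$ boundary term vanish, while $(f4)$ and dominated convergence handle the convergence $f(x,v_{m})\to f(x,v_{0})$, so that $v_{0}$ solves the extended problem with $m=0$.

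The main obstacle will be to ensure that $v_{0}\not\equiv 0$, since weak convergence alone is compatible with $v_{0}=0$. For this I would pass to the limit in the identity above: from $(f3)$ and $(f4)$ one has $|G(x,t)|\le C(t^{2}+|t|^{p+1})$, and the strong convergence $v_{m}(\cdot,0)\to v_{0}(\cdot,0)$ in $L^{2}\cap L^{p+1}$ allows the generalized dominated convergence theorem to yield
$$
\tfrac{\kappa_{s}}{2}\int_{\partial^{0}\mathcal{S}_{T}} G(x,v_{0})\, dx=\lim_{m\to 0}\alpha_{m}\ge \underline{\alpha}>0,
$$
which forces $v_{0}\not\equiv 0$ and completes the proof.
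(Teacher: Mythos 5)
Your overall strategy coincides with the paper's: produce $v_{m}$ at the linking level $\alpha_{m}$, prove $0<K_{1}\le\alpha_{m}\le K_{2}$ uniformly for small $m$, rerun the Jeanjean/Cerami boundedness argument with $m\to 0$ in place of $j\to\infty$ to get $\limsup_{m\to 0}\|v_{m}\|_{\mathbb{X}^{s}_{m,T}}<\infty$, pass to the limit in the weak formulation, and use the identity $2\mathcal{J}_{m}(v_{m})-\langle\mathcal{J}_{m}'(v_{m}),v_{m}\rangle=\int G(x,v_{m})\,dx\ge 2K_{1}$ together with strong trace convergence to rule out $v_{0}\equiv 0$. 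All of that matches the paper.

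There is, however, one claim that is false as stated and that hides the main technical difficulty of the limit $m\to 0$: the assertion that the trace embedding $\mathbb{X}^{s}_{m,T}\hookrightarrow L^{q}((0,T)^{N})$ has constants uniform for $m\in[0,1]$. By the trace inequality $\kappa_{s}|v(\cdot,0)|^{2}_{\mathbb{H}^{s}_{m,T}}\le\|v\|^{2}_{\mathbb{X}^{s}_{m,T}}$, the zero Fourier mode of the trace is controlled only through $m^{2s}|c_{0}|^{2}$, so the $L^{2}$-trace constant blows up like $m^{-2s}$ as $m\to 0$ (concretely, $v=\theta(m\xi)$ has $\|v\|^{2}_{\mathbb{X}^{s}_{m,T}}\to 0$ while its trace is the fixed constant $1$). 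Uniformity holds only on the zero-mean subspace $\mathbb{Z}_{m}$, which suffices for the lower bound $\alpha_{m}\ge K_{1}$ but not for the compactness of $\{v_{m}(\cdot,0)\}$ or of the normalized sequence $\{w_{m}(\cdot,0)\}$ in your Jeanjean argument: a uniform bound on $\|v_{m}\|_{\mathbb{X}^{s}_{m,T}}$ does not by itself control the mean of the trace as $m\to 0$. The paper closes this hole with an extra input from the superquadratic growth $(f5)$: applying $F(x,t)\ge |t|^{2}-B_{1}$ in the energy identity gives $2|v_{m}(\cdot,0)|^{2}_{L^{2}(0,T)^{N}}\le\|v_{m}\|^{2}_{\mathbb{X}^{s}_{m,T}}+2B_{1}T^{N}$, and it also needs a weighted $L^{2}$ estimate on finite cylinders $(0,T)^{N}\times(0,\delta)$ (in terms of the trace and of $\partial_{\xi}v$) to get local weak compactness of $v_{m}$ itself, since the $m^{2}\xi^{1-2s}v^{2}$ term in the norm degenerates. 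Your proposal needs both of these supplements before the extraction of the strongly convergent trace subsequence and the dichotomy $w\equiv 0$ versus $w\not\equiv 0$ can be carried out.
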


\noindent
The paper is organized as follows: in Section $2$ we give some notations and preliminaries results about the involved fractional Sobolev spaces; in Section $3$ we consider the extension problem (\ref{R}) which localizes the non-local problem (\ref{P});
in Section $4$ we investigate the existence of solutions to $(\ref{R})$ via variational methods; finally we show that we can find a nontrivial solution to $(\ref{P'})$ by passing to the limit in $(\ref{R})$ as $m\rightarrow 0$.

%in Sect $4$ we prove that, for any fixed $m>0$, there exists a nontrivial solution to $(\ref{R})$; finally we show that we can find a nontrivial solution to $(\ref{P'})$ by passing to the limit in $(\ref{R})$ as $m\rightarrow 0$.

\section{Notations and Preliminaries}
\noindent
In this section we introduce the notations and we collect some facts which will be used along the paper.\\
Let 
$$
\R^{N+1}_{+}=\{(x,\xi)\in \R^{N+1}: x\in \R^{N}, \xi>0 \}
$$
be the upper half-space in $\R^{N+1}$. 

We denote by $\mathcal{S}_{T}=(0,T)^{N}\times(0,\infty)$ the half-cylinder in $\R^{N+1}_{+}$ with basis $\partial^{0}\mathcal{S}_{T}=(0,T)^{N}\times \{0\}$
and lateral boundary $\partial_{L}\mathcal{S}_{T}=\partial (0,T)^{N}\times [0,+\infty)$.\\
%We denote the upper half-space in $\R^{N+1}$ by 
%$$
%\R^{N+1}_{+}=\{(x,\xi)\in \R^{N+1}: x\in \R^{N}, \xi>0 \}.
%$$
%Let $\mathcal{S}_{T}=(0,T)^{N}\times(0,\infty)$ be the half-cylinder in $\R^{N+1}_{+}$ with basis $\partial^{0}\mathcal{S}_{T}=(0,T)^{N}\times \{0\}$ and we denote by $\partial_{L}\mathcal{S}_{T}=\partial (0,T)^{N}\times [0,+\infty)$ its lateral boundary.\\
With $||v||_{L^{r}(\mathcal{S}_{T})}$ we always denote the norm of $v\in L^{r}(\mathcal{S}_{T})$ and with $|u|_{L^{r}(0,T)^{N}}$ the $L^{r}(0,T)^{N}$ norm of $u \in L^{r}(0,T)^{N}$.\\
Let $s\in (0,1)$ and $m> 0$. Let $A\subset \R^{N}$ be a domain. 

We define $L^{2}(A\times \R_{+},\xi^{1-2s})$ as the set of measurable functions $v$ on $A\times \R_{+}$ such that
$$
\iint_{A\times \R_{+}} \xi^{1-2s} v^{2} dxd\xi<\infty.
$$
%We denote by $L^{2}(A\times \R_{+},y^{1-2s})$ the space of all measurable functions $v$ defined on $A\times \R_{+}$ such that
%$$
%\iint_{A\times \R_{+}} \xi^{1-2s} v^{2} dxd\xi<\infty.
%$$
We say that  $v\in H^{1}_{m}(A\times \R_{+},\xi^{1-2s})$ if $v$ and its weak gradient $\nabla v$ belong to $L^{2}(A\times \R_{+},\xi^{1-2s})$.
The norm of $v$ in $H^{1}_{m}(A\times \R_{+},\xi^{1-2s})$ is given by
$$
\iint_{A\times \R_{+}} \xi^{1-2s} (|\nabla v|^{2}+m^{2}v^{2}) \,dxd\xi<\infty.
$$
It is clear that $H^{1}_{m}(A\times \R_{+},\xi^{1-2s})$ is a Hilbert space with the inner product
$$
\iint_{A\times \R_{+}} \xi^{1-2s} (\nabla v \nabla z+m^{2}v z) \,dxd\xi.
$$
When $m=1$, we set  $H^{1}(A\times \R_{+},\xi^{1-2s})=H^{1}_{1}(A\times \R_{+},\xi^{1-2s})$.\\
We denote by $\mathcal{C}^{\infty}_{T}(\R^{N})$ the space of functions 
%Let  $u\in  \mathcal{C}^{\infty}_{T}(\R^{N})$; that is 
$u\in \mathcal{C}^{\infty}(\R^{N})$ such that $u$ is $T$-periodic in each variable, that is
$$
u(x+Te_{i})=u(x) \mbox{ for all } x\in \R^{N}, i=1, \dots, N. 
$$
Let $u\in  \mathcal{C}^{\infty}_{T}(\R^{N})$. Then we know that 
$$
u(x)=\sum_{k\in \Z^{N}} c_{k} \frac{e^{\imath\omega k\cdot x}}{\sqrt{T^{N}}} \quad \mbox{ for all } x\in \R^{N},
$$
where 
$$
\omega=\frac{2\pi}{T} \quad \mbox{ and } \quad c_{k}=\frac{1}{\sqrt{T^{N}}} \int_{(0,T)^{N}} u(x)e^{-\imath k\omega \cdot x}dx \quad (k\in \Z^{N})
$$ 
are the Fourier coefficients of $u$. We define the fractional Sobolev space $\mathbb{H}^{m}_{T}$ as the closure of 
$\mathcal{C}^{\infty}_{T}(\R^{N})$ under the norm 
\begin{equation*}\label{h12norm}
|u|_{\mathbb{H}^{s}_{m,T}}:=\sqrt{ \sum_{k\in \Z^{N}} (\omega^{2}|k|^{2}+m^{2})^{s} \, |c_{k}|^{2}}. 
\end{equation*}
We will also use the notation
$$
[u]_{\mathbb{H}^{s}_{m,T}}=\sqrt{\sum_{k\in \Z^{N}} \omega^{2}|k|^{2s}}
$$
to denote the Gagliardo semi-norm of $u$.

When $m=1$, we set $\mathbb{H}^{s}_{T}=\mathbb{H}^{s}_{1,T}$ and $|\cdot |_{\mathbb{H}^{s}_{T}}=|\cdot|_{\mathbb{H}^{s}_{1,T}}$. Finally we introduce the functional space $\mathbb{X}^{s}_{m,T}$ defined as the completion of 
\begin{align*}
\mathcal{C}_{T}^{\infty}(\overline{\R^{N+1}_{+}})=\Bigl\{&v\in \mathcal{C}^{\infty}(\overline{\R^{N+1}_{+}}): v(x+Te_{i},\xi)=v(x,\xi) \\
&\mbox{ for every } (x,\xi)\in \overline{\R_{+}^{N+1}}, i=1, \dots, N \Bigr\}
\end{align*}
under the $H^{1}_{m}(\mathcal{S}_{T},\xi^{1-2s})$ norm 
\begin{equation*}
||v||_{\mathbb{X}^{s}_{m,T}}:=\sqrt{\iint_{\mathcal{S}_{T}} \xi^{1-2s} (|\nabla v|^{2}+m^{2}v^{2}) \, dxd\xi}.
\end{equation*} 
If $m=1$, we set $\mathbb{X}^{s}_{T}=\mathbb{X}^{s}_{1,T}$ and $||\cdot ||_{\mathbb{X}^{s}_{T}}=||\cdot||_{\mathbb{X}^{s}_{1,T}}$. \\
In order to lighten the notation, it is convenient to omit the indices $s$ and $T$ (which are fixed) appearing in the Sobolev spaces and norms just defined. From now on we will write $\mathbb{X}_{m}$, $\mathbb{X}$, $\mathbb{H}_{m}$, $\mathbb{H}$, $||\cdot||_{\mathbb{X}_{m}}$, $||\cdot||_{\mathbb{X}}$, $|\cdot|_{\mathbb{H}}$ and $|\cdot|_{\mathbb{H}_{m}}$. \\  
Now we recall that it is possible to define a trace operator from $\mathbb{X}_{m}$ to $\mathbb{H}_{m}$:
\begin{thm}\cite{A2, A3}\label{tracethm}
There exists a surjective linear operator $\textup{Tr} : \mathbb{X}_{m} \rightarrow \mathbb{H}_{m}$  such that:
\begin{compactenum}[(i)]
\item $\textup{Tr}(v)=v|_{\partial^{0} \mathcal{S}_{T}}$ for all $v\in \mathcal{C}_{T}^{\infty}(\overline{\R^{N+1}_{+}}) \cap \mathbb{X}_{m}$;
\item $\textup{Tr}$ is bounded and 
\begin{equation}\label{tracein}
\kappa_{s} |\textup{Tr}(v)|^{2}_{\mathbb{H}_{m}}\leq ||v||^{2}_{\mathbb{X}_{m}} \mbox{ for every } v\in \mathbb{X}_{m}.
\end{equation}
In particular, equality holds in (\ref{tracein}) for some $v\in \mathbb{X}_{m}$ if and only if $v$ weakly solves 
$$ 
-\dive(\xi^{1-2s} \nabla v)+m^{2}\xi^{1-2s}v =0 \, \mbox{ in } \, \mathcal{S}_{T}.
$$
\end{compactenum}
\end{thm}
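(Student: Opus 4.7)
The plan is to construct the trace operator via a Fourier-extension procedure, which simultaneously provides surjectivity and the sharp inequality, with equality case characterized as weak solutions of the extension PDE.

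\textbf{Step 1: Mode-by-mode extension.} Given $u = \sum_{k\in\Z^N} c_k \frac{e^{\imath\omega k\cdot x}}{\sqrt{T^N}} \in \mathbb{H}_m$, I would look for an extension of the form
\begin{equation*}
v(x,\xi) = \sum_{k\in\Z^N} c_k \,\varphi_k(\xi) \,\frac{e^{\imath\omega k\cdot x}}{\sqrt{T^N}},
\end{equation*}
where, setting $\lambda_k := \sqrt{\omega^2|k|^2+m^2}$, the profile $\varphi_k$ is the unique bounded solution of the ODE $-(\xi^{1-2s}\varphi_k')' + \lambda_k^2\,\xi^{1-2s}\varphi_k = 0$ on $(0,\infty)$ with $\varphi_k(0)=1$ and $\varphi_k(\xi)\to 0$ as $\xi\to\infty$. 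After the rescaling $\eta=\lambda_k\xi$ this reduces to a fixed modified-Bessel ODE, whose decaying solution is $\eta^s K_s(\eta)$ up to a normalization, so $\varphi_k$ is explicit and satisfies $-\lim_{\xi\to 0^+}\xi^{1-2s}\varphi_k'(\xi) = \kappa_s\lambda_k^{2s}$ (this is where the constant $\kappa_s$ enters).

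\textbf{Step 2: Energy identity for the extension.} Applying Parseval and integrating by parts in $\xi$, using the ODE to kill the bulk term and the normalization of $\varphi_k'(0)$ to compute the boundary term, I would obtain
\begin{equation*}
\|v\|_{\mathbb{X}_m}^2 = \sum_{k\in\Z^N}|c_k|^2 \int_0^\infty \xi^{1-2s}\bigl(|\varphi_k'|^2+\lambda_k^2|\varphi_k|^2\bigr)\,d\xi = \kappa_s \sum_{k\in\Z^N}\lambda_k^{2s}|c_k|^2 = \kappa_s |u|_{\mathbb{H}_m}^2.
\end{equation*}
This proves that $v\in \mathbb{X}_m$ and gives a continuous right inverse $E:\mathbb{H}_m\to\mathbb{X}_m$ of the formal trace map, hence surjectivity of Tr together with (i).

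\textbf{Step 3: The sharp trace inequality.} For a general $v\in \mathcal{C}_T^\infty(\overline{\R^{N+1}_+})\cap\mathbb{X}_m$, expand in $x$ as $v(x,\xi)=\sum_k d_k(\xi)\frac{e^{\imath\omega k\cdot x}}{\sqrt{T^N}}$. By Parseval,
\begin{equation*}
\|v\|_{\mathbb{X}_m}^2 = \sum_{k\in\Z^N} \int_0^\infty \xi^{1-2s}\bigl(|d_k'(\xi)|^2+\lambda_k^2|d_k(\xi)|^2\bigr)\,d\xi.
\end{equation*}
For each fixed $k$, the one-dimensional weighted functional $\int_0^\infty \xi^{1-2s}(|d'|^2+\lambda_k^2|d|^2)d\xi$ subject to the constraint $d(0)=d_k(0)$ is minimized precisely by $d(\xi)=d_k(0)\varphi_k(\xi)$ (its Euler--Lagrange equation is exactly the ODE from Step 1), with minimum value $\kappa_s\lambda_k^{2s}|d_k(0)|^2$. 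Summing in $k$ yields $\kappa_s|\textup{Tr}(v)|_{\mathbb{H}_m}^2 \le \|v\|_{\mathbb{X}_m}^2$ on the dense subspace, hence (ii) by density, which simultaneously shows Tr is well defined and bounded. Equality forces each $d_k$ to be the minimizer $d_k(0)\varphi_k$, which is exactly the statement that $v$ is the Fourier extension of its trace, i.e.\ $v$ weakly solves $-\dive(\xi^{1-2s}\nabla v)+m^2\xi^{1-2s}v=0$ in $\mathcal{S}_T$ with the periodic boundary condition.

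\textbf{Main obstacle.} The bulk of the work is Step 1: producing $\varphi_k$ with the precise behavior $-\lim_{\xi\to 0}\xi^{1-2s}\varphi_k'(\xi)=\kappa_s\lambda_k^{2s}$ and enough decay to justify all integrations by parts, uniformly in $k$, so that the Fourier series for $v$ converges in $\mathbb{X}_m$. This amounts to a careful analysis of the Bessel-type ODE near $\xi=0$ and $\xi=\infty$, and is where the specific value $\kappa_s=2^{1-2s}\Gamma(1-s)/\Gamma(s)$ is forced by the normalization of $K_s$. Once Step 1 is under control, Steps 2 and 3 reduce to Parseval plus a one-dimensional minimization, and the equality statement follows for free from the Euler--Lagrange characterization of the minimizers.
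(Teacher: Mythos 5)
Your proposal is correct and follows essentially the same route as the paper's source (\cite{A2,A3}), whose construction is summarized in the Remark after the extension theorem: the extension is built mode-by-mode via the Bessel profile $\theta_k(\xi)=\theta(\sqrt{\omega^2|k|^2+m^2}\,\xi)$ with $\kappa_s=\int_0^\infty\xi^{1-2s}(|\theta'|^2+|\theta|^2)\,d\xi=-\lim_{\xi\to 0}\xi^{1-2s}\theta'(\xi)$, and the trace inequality with its equality case comes from the same one-dimensional minimization you describe. No substantive differences.
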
 
%From this we can deduce the following embedding
Finally we have the following embeddings:
\begin{thm}\cite{A2, A3}\label{compacttracethm}
Let $N> 2s$. Then  $\textup{Tr}(\mathbb{X}_{m})$ is continuously embedded in $L^{q}(0,T)^{N}$ for any  $1\leq q \leq 2^{\s}_{s}$.  Moreover,  $\textup{Tr}(\mathbb{X}_{m})$ is compactly embedded in $L^{q}(0,T)^{N}$  for any  $1\leq q < 2^{\s}_{s}$. 
\end{thm}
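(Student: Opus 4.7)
The plan is to reduce the two claims to Sobolev-type embeddings for the fractional space $\mathbb{H}_m$, and then to establish these by Fourier-series manipulations together with the classical fractional Sobolev inequality. The key reduction is Theorem \ref{tracethm}: part~(i) gives $\textup{Tr}(\mathbb{X}_m)=\mathbb{H}_m$, while the bound (\ref{tracein}) yields $|\textup{Tr}(v)|_{\mathbb{H}_m}\leq \kappa_s^{-1/2}\|v\|_{\mathbb{X}_m}$. Consequently, once I prove that $\mathbb{H}_m\hookrightarrow L^q(0,T)^N$ continuously for $1\leq q\leq 2^{\s}_{s}$ and compactly for $1\leq q<2^{\s}_{s}$, the corresponding embeddings for $\textup{Tr}(\mathbb{X}_m)$ follow immediately.

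For the continuous embedding I would first observe that, since $m>0$, the inequality $(\omega^{2}|k|^{2}+m^{2})^{s}\geq m^{2s}$ gives the trivial bound $|u|_{L^{2}(0,T)^{N}}\leq m^{-s}|u|_{\mathbb{H}_{m}}$. Moreover, $(\omega^{2}|k|^{2}+m^{2})^{s}$ is comparable to $(\omega|k|)^{2s}+m^{2s}$, so $|u|_{\mathbb{H}_{m}}$ is equivalent to the standard fractional Sobolev norm on the torus and $\mathbb{H}_{m}$ is naturally identified with $H^{s}(\mathbb{T}^{N})$. I would then derive the critical embedding $H^{s}(\mathbb{T}^{N})\hookrightarrow L^{2^{\s}_{s}}(\mathbb{T}^{N})$ by a periodic cut-off argument: view a smooth $T$-periodic $u$ as a function on $\R^{N}$, multiply by a smooth compactly supported cutoff $\chi$, apply the classical fractional Sobolev inequality $|\chi u|_{L^{2^{\s}_{s}}(\R^{N})}\leq C[\chi u]_{H^{s}(\R^{N})}$, and control the right-hand side by a finite sum of periodic Gagliardo contributions on shifted cubes. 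The intermediate values $q\in(2,2^{\s}_{s})$ then follow by interpolation between $L^{2}$ and $L^{2^{\s}_{s}}$.

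For the compactness claim I would use a Fourier truncation argument. Let $(u_{n})\subset\mathbb{H}_{m}$ be bounded with Fourier coefficients $c_{k}^{(n)}$, and split $u_{n}=u_{n}^{\leq K}+u_{n}^{>K}$ depending on whether $|k|\leq K$. The high-frequency tail obeys
$$
|u_{n}^{>K}|_{L^{2}(0,T)^{N}}^{2}=\sum_{|k|>K}|c_{k}^{(n)}|^{2}\leq (\omega^{2} K^{2}+m^{2})^{-s}|u_{n}|_{\mathbb{H}_{m}}^{2},
$$
which tends to zero uniformly in $n$ as $K\to\infty$. The truncation $u_{n}^{\leq K}$ lives in the finite-dimensional subspace spanned by the modes $|k|\leq K$ and so admits an $L^{2}$-convergent subsequence by Bolzano-Weierstrass, and a diagonal extraction produces an $L^{2}$-Cauchy subsequence. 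For $q\in[1,2^{\s}_{s})$, the interpolation inequality $|u|_{L^{q}}\leq |u|_{L^{2}}^{\theta}|u|_{L^{2^{\s}_{s}}}^{1-\theta}$ with a suitable $\theta\in(0,1]$, combined with the uniform $L^{2^{\s}_{s}}$-bound from the previous step, upgrades $L^{2}$-convergence to $L^{q}$-convergence.

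The principal obstacle is the critical continuous embedding, since on the torus one cannot simply restrict the $\R^{N}$ inequality: care is needed in the cut-off step to bound the full Gagliardo seminorm of $\chi u$ in terms of the periodic Fourier-based norm, controlling the cross-cube interactions coming from the long-range part of the fractional seminorm. All remaining steps are routine Fourier and interpolation manipulations once this critical bound is in hand.
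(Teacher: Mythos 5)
The paper does not actually prove this statement: it is quoted verbatim from \cite{A2, A3}, so there is no internal proof to compare against. Your reconstruction is correct and is essentially the standard route one would find in those references. The reduction via Theorem \ref{tracethm} to embeddings of $\mathbb{H}_{m}\cong H^{s}(\mathbb{T}^{N})$ is exactly right, the Fourier truncation plus interpolation for compactness is clean (the tail bound $\sum_{|k|>K}|c_{k}|^{2}\leq(\omega^{2}K^{2}+m^{2})^{-s}|u|_{\mathbb{H}_{m}}^{2}$ is uniform in $n$, and the finite-dimensional piece is handled by Bolzano--Weierstrass), and you correctly isolate the only delicate point, namely the critical embedding. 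Two remarks. First, in the cut-off step you implicitly need the equivalence between the Fourier-based seminorm $\sum_{k}|k|^{2s}|c_{k}|^{2}$ and the localized Gagliardo seminorm of the periodic extension; this is most painlessly obtained by Plancherel applied to $\iint_{(0,T)^{N}\times\R^{N}}|u(x+h)-u(x)|^{2}|h|^{-N-2s}\,dx\,dh=C\sum_{k}|\omega k|^{2s}|c_{k}|^{2}$, after which the cross-cube (long-range) contributions of $[\chi u]_{H^{s}(\R^{N})}$ are controlled by $|u|_{L^{2}(0,T)^{N}}$ since $\int_{|h|>1}|h|^{-N-2s}\,dh<\infty$; you should make this explicit, but it is routine. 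Second, it is worth noting that the present paper, in Section 5, obtains the subcritical bound $|v(\cdot,0)|_{L^{q}(0,T)^{N}}\leq C\|v\|_{\mathbb{X}_{m}}$ for $2<q<2^{\s}_{s}$ by a genuinely different and more elementary argument (H\"older on the Fourier coefficients plus the Hausdorff--Young--Riesz theorem), which has the advantage of giving constants independent of $m$; that route, however, fails at $q=2^{\s}_{s}$ because the series $\sum_{|k|\geq1}|k|^{-2sq'/(2-q')}$ diverges at the critical exponent, so your cut-off argument (or the extension inequality itself) is indeed needed for the endpoint case.
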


%We define 
%\begin{align}\label{extu}
%v(x,y)=\sum_{k\in \Z^{N}} c_{k} \theta_{k}(y) \frac{e^{\imath \omega k\cdot x}}{{\sqrt{T^{N}}}}
%\end{align}  
%where $\theta_{k}(y)=  \theta(\sqrt{\omega^{2} |k|^{2}+ m^{2}} y)$ and $\theta(y)\in H^{1}(\R_{+},y^{1-2s})$ solves the following ODE
%\begin{equation} \label{ccv}
%\left\{
%\begin{array}{cc}
%&\theta^{''}+\frac{1-2s}{y}\theta^{'}-\theta=0 \mbox{ in } \R_{+}  \\
%&\theta(0)=1 \mbox{ and } \theta(\infty)=0
%\end{array}.
%\right.
%\end{equation}
%It is known that $\theta(y)=\frac{2}{\Gamma(s)} (\frac{y}{2})^{s} K_{s}(y)$ where $K_{s}$ is the Bessel function of second kind with order $s$ (see \cite{Erd}), and
%being $K'_{s}=\frac{s}{y}K_{s}-K_{s-1}$, we can see that
%$$
%\kappa_{s}:= \int_{0}^{\infty} y^{1-2s} (|\theta'(y)|^{2}+|\theta(y)|^{2}) dy=-\lim_{y \rightarrow 0} y^{1-2s}\theta'(y)=2^{1-2s} \frac{\Gamma(1-s)}{\Gamma(s)}.
%$$

\section{Extension Method}
\noindent
As mentioned in the introduction, crucial to our results is that $(-\Delta_{x}+m^{2})^{s}$ is a nonlocal operator which can be realized through a local problem in $\mathcal{S}_{T}$. This result can be stated more precisely as follows:
\begin{thm}\cite{A2, A3}
Let $u\in \mathbb{H}_{m}$. Then there exists a unique $v\in \mathbb{X}_{m}$ such that
\begin{equation}\label{extPu}
\left\{
\begin{array}{ll}
-\dive(\xi^{1-2s} \nabla v)+m^{2}\xi^{1-2s}v =0 &\mbox{ in }\mathcal{S}_{T}  \\
v_{| {\{x_{i}=0\}}}= v_{| {\{x_{i}=T\}}} & \mbox{ on } \partial_{L}\mathcal{S}_{T} \\
v(\cdot,0)=u  &\mbox{ on } \partial^{0}\mathcal{S}_{T}
\end{array}
\right.
\end{equation}
and
\begin{align}\label{conormal}
-\lim_{\xi \rightarrow 0} \xi^{1-2s}\frac{\partial v}{\partial \xi}(x,\xi)=\kappa_{s} (-\Delta_{x}+m^{2})^{s}u(x) \mbox{ in } \mathbb{H}^{*}_{m},
\end{align}
where  $\mathbb{H}^{*}_{m}$ is the dual of $\mathbb{H}_{m}$. We call $v$ the extension of $u$.
\end{thm}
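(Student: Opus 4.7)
My plan is to construct $v$ explicitly via Fourier series in $x$, reducing the extension problem to a family of one-dimensional ODEs in $\xi$ indexed by the frequencies $k$. Writing $u = \sum_{k \in \Z^N} c_k \frac{e^{\imath\omega k \cdot x}}{\sqrt{T^N}}$, I look for $v$ of the form
\[
v(x,\xi) = \sum_{k \in \Z^N} c_k\, \phi_k(\xi) \, \frac{e^{\imath\omega k \cdot x}}{\sqrt{T^N}},
\]
with $\phi_k(0)=1$. Inserting this ansatz into $-\dive(\xi^{1-2s}\nabla v) + m^2 \xi^{1-2s} v = 0$ and separating variables, each $\phi_k$ must solve
\[
-(\xi^{1-2s}\phi_k')' + \mu_k^2\,\xi^{1-2s}\phi_k = 0 \quad \text{on } (0,\infty),
\]
where $\mu_k := \sqrt{\omega^2|k|^2+m^2}$. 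The rescaling $\phi_k(\xi)=\psi(\mu_k \xi)$ collapses this into a single universal ODE whose decaying solution, normalized by $\psi(0)=1$, is the classical Bessel-type profile $\psi(t)=c_s\,t^s K_s(t)$ for an explicit $c_s$.

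Given this candidate $v$, I would verify in turn that $v \in \mathbb{X}_m$, that it satisfies the boundary conditions, and that the conormal identity (\ref{conormal}) holds. For the first point, Parseval combined with the change of variables $\xi \mapsto \mu_k \xi$ gives
\[
\|v\|_{\mathbb{X}_m}^2 = \sum_{k\in \Z^N} |c_k|^2 \int_0^\infty \xi^{1-2s}\bigl(|\phi_k'(\xi)|^2 + m^2 \phi_k(\xi)^2\bigr)\, d\xi,
\]
and the right-hand side equals $\kappa_s \sum_{k \in \Z^N} \mu_k^{2s}|c_k|^2 = \kappa_s |u|_{\mathbb{H}_m}^2 < \infty$, where the constant $\kappa_s$ arises naturally from the universal ODE (this also pins down $c_s$). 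Periodicity is built into the ansatz, and $v(\cdot,0)=u$ follows from $\phi_k(0)=1$ together with the trace bound of Theorem \ref{tracethm} applied to partial sums. The conormal identity is proved by analyzing the small-$t$ asymptotics of $t^s K_s(t)$, which give $-\lim_{\xi \to 0^+}\xi^{1-2s}\phi_k'(\xi) = \kappa_s\,\mu_k^{2s}$; summing term by term in the dual space $\mathbb{H}^*_m$ then recovers $\kappa_s (-\Delta_x+m^2)^s u$.

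For uniqueness, if $v_1, v_2$ both solve (\ref{extPu}), then $w=v_1-v_2$ lies in $\mathbb{X}_m$, has zero trace, and weakly satisfies the same degenerate equation with periodic lateral boundary. Testing against $w$ itself, a valid choice by density of smooth periodic functions vanishing on $\partial^0 \mathcal{S}_T$, gives $\iint_{\mathcal{S}_T} \xi^{1-2s}(|\nabla w|^2 + m^2 w^2)\,dx\,d\xi = 0$, so $w \equiv 0$.

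The main technical obstacle I expect is the one-dimensional ODE analysis: isolating the correct decaying solution in the weighted space $L^2((0,\infty),\xi^{1-2s})$, showing that its derivative belongs to the same weighted $L^2$, and extracting the precise constant $\kappa_s = 2^{1-2s}\Gamma(1-s)/\Gamma(s)$ from the behaviour of $t^s K_s(t)$ near the origin. Once this ODE computation is in hand, the rest is routine Parseval bookkeeping together with a density argument to justify interchanging summation with $\lim_{\xi\to 0}$ in the $\mathbb{H}^*_m$ topology.
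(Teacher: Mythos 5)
Your construction is exactly the one the paper relies on (see the remark following the theorem and the cited works): Fourier decomposition in $x$, reduction to the universal Bessel-type ODE $\theta''+\frac{1-2s}{\xi}\theta'-\theta=0$ whose decaying solution is $\frac{2}{\Gamma(s)}\bigl(\frac{\xi}{2}\bigr)^{s}K_{s}(\xi)$, Parseval giving $||v||_{\mathbb{X}_{m}}^{2}=\kappa_{s}|u|_{\mathbb{H}_{m}}^{2}$, and the small-$\xi$ asymptotics of $\xi^{1-2s}\theta'$ producing the constant $\kappa_{s}=2^{1-2s}\Gamma(1-s)/\Gamma(s)$, together with the standard energy argument for uniqueness. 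The only slip is that your displayed Parseval identity omits the horizontal-gradient contribution $\omega^{2}|k|^{2}\phi_{k}(\xi)^{2}$ from the integrand (it should be $|\phi_{k}'|^{2}+\mu_{k}^{2}\phi_{k}^{2}$, not $|\phi_{k}'|^{2}+m^{2}\phi_{k}^{2}$), but the conclusion you state, $\kappa_{s}\sum_{k}\mu_{k}^{2s}|c_{k}|^{2}$, is precisely what the corrected integrand yields after the rescaling $\xi\mapsto\mu_{k}\xi$.
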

%$$
%\mathbb{H}^{-s}_{m,T}=\Bigl \{u=\sum_{k\in \Z^{N}} c_{k} \frac{e^{\imath \omega k\cdot x}}{{\sqrt{T^{N}}}}: \frac{|c_{k}|^{2}}{(\omega^{2}|k|^{2}+m^{2})^{s}}< \infty \Bigr\}
%$$ 

\begin{remark}
In \cite{A2, A3} we proved that if $u=\sum_{k\in \Z^{N}} c_{k} \frac{e^{\imath \omega k\cdot x}}{{\sqrt{T^{N}}}}\in \mathbb{H}_{m}$, then  its extension is given by
\begin{align*}
v(x,\xi)=\sum_{k\in \Z^{N}} c_{k} \theta_{k}(\xi) \frac{e^{\imath \omega k\cdot x}}{{\sqrt{T^{N}}}}\in \mathbb{X}_{m}
\end{align*}  
where $\displaystyle{\theta_{k}(\xi)=  \theta(\sqrt{\omega^{2} |k|^{2}+ m^{2}} \xi)}$ and $\theta \in H^{1}(\R_{+},\xi^{1-2s})$ solves the following ODE
\begin{equation} \label{ccv}
\left\{
\begin{array}{cc}
&\theta^{''}+\frac{1-2s}{\xi}\theta^{'}-\theta=0 \mbox{ in } \R_{+}  \\
&\theta(0)=1 \mbox{ and } \theta(\infty)=0
\end{array}.
\right.
\end{equation}
It is known (see \cite{Erd}) that $\displaystyle{\theta(\xi)=\frac{2}{\Gamma(s)} \Bigl(\frac{\xi}{2}\Bigr)^{s} K_{s}(\xi)}$ where $K_{s}$ is the Bessel function of second kind with order $s$, and
being $\displaystyle{K'_{s}=\frac{s}{y}K_{s}-K_{s-1}}$, we can see that
$$
\kappa_{s}:= \int_{0}^{\infty} \xi^{1-2s} (|\theta'(\xi)|^{2}+|\theta(\xi)|^{2}) d\xi=-\lim_{\xi \rightarrow 0} \xi^{1-2s}\theta'(\xi)=2^{1-2s} \frac{\Gamma(1-s)}{\Gamma(s)}.
$$
By using this fact we can deduce that $||v||_{\mathbb{X}_{m}}=\sqrt{\kappa_{s}}|u|_{\mathbb{H}_{m}}$.
\end{remark}

\noindent
Now we take advantage of the previous result to reformulate  nonlocal problems with periodic boundary conditions, in a local way.\\
Let $g \in \mathbb{H}^{*}_{m}$ and consider the following two problems:
\begin{equation}\label{P*}
\left\{
\begin{array}{ll}
(-\Delta_{x}+m^{2})^{s}u=g &\mbox{ in } (0,T)^{N}  \\
u(x+Te_{i})=u(x) & \mbox{ for all } x\in \R^{N}
\end{array}
\right.
\end{equation}
and
\begin{equation}\label{P**}
\left\{
\begin{array}{ll}
-\dive(\xi^{1-2s} \nabla v)+m^{2}\xi^{1-2s}v =0 &\mbox{ in }\mathcal{S}_{T}  \\
v_{| {\{x_{i}=0\}}}= v_{| {\{x_{i}=T\}}} & \mbox{ on } \partial_{L}\mathcal{S}_{T} \\
\frac{\partial v}{\partial \nu^{1-2s}}=g(x)  &\mbox{ on } \partial^{0}\mathcal{S}_{T}
\end{array}.
\right.
\end{equation}

\begin{defn}
We say that $v \in \mathbb{X}_{m}$ is a weak solution to (\ref{P**})
if for every $\phi \in \mathbb{X}_{m}$ it holds
$$
\iint_{\mathcal{S}_{T}} \xi^{1-2s} (\nabla v \nabla \phi + m^{2} v \phi  ) \, dxd\xi=\kappa_{s}\langle g, \textup{Tr}(\phi)\rangle_{\mathbb{H}^{*}_{m}, \mathbb{H}_{m}}. 
$$
\end{defn}

\begin{defn}
We say that $u\in \mathbb{H}_{m}$ is a weak solution to (\ref{P*}) if $u=\textup{Tr}(v)$
and $v$ is a weak solution to (\ref{P**}).
\end{defn}
%Therefore we can conclude this section, giving the definition of weak solution to $(\ref{R})$.
%We say that $v \in \mathbb{X}^{s}_{m,T}$ is a weak solution to $(\ref{R})$ if 
%$$
%\iint_{\mathcal{S}_{T}} y^{1-2s} (\nabla v \nabla \phi + m^{2} v \phi  ) \, dxdy=\kappa_{s}\int_{(0,T)^{N}} [m^{2s} \textup{Tr}(v) +f(x,\textup{Tr}(v))] \textup{Tr}(\phi) \, dx 
%$$
%holds for any $\phi \in \mathbb{X}^{s}_{m,T}$.
\begin{remark}
Later, with abuse of notation, we will denote by $v(\cdot,0)$ the trace $\textup{Tr}(v)$ of a function $v\in \mathbb{X}_{m}$.
\end{remark}

\section{Periodic solutions in the cylinder $\mathcal{S}_{T}$}
\noindent
In this section we prove the existence of a nontrivial solution to $(\ref{P})$. As explained in the previous section, we know that the study of $(\ref{P})$ is equivalent to investigate the existence of weak solutions to
\begin{equation}\label{R1}
\left\{
\begin{array}{ll}
-\dive(\xi^{1-2s} \nabla v)+m^{2}\xi^{1-2s}v =0 &\mbox{ in }\mathcal{S}_{T}\!:=\!(0,T)^{N}\! \times\! (0,\infty)  \\
v_{| {\{x_{i}=0\}}}= v_{| {\{x_{i}=T\}}} & \mbox{ on } \partial_{L}\mathcal{S}_{T}\!:=\!\partial (0,T)^{N} \!\times\! [0,\infty) \\
\frac{\partial v}{\partial \nu^{1-2s}}=\kappa_{s} [m^{2s}v+f(x,v)]   &\mbox{ on }\partial^{0}\mathcal{S}_{T}\!:=\!(0,T)^{N} \!\times\! \{0\}
\end{array}.
\right.
\end{equation}
For simplicity, we will assume that $\kappa_{s}=1$.

We introduce the following functional on $\mathbb{X}_{m}$
$$
\mathcal{J}_{m}(v)=\frac{1}{2}||v||^{2}_{\mathbb{X}_{m}} -\frac{ m^{2s}}{2}|v(\cdot,0)|_{L^{2}(0,T)^{N}}^{2}- \int_{\partial^{0}\mathcal{S}_{T}} F(x,v) dx.
$$
%defined for $v\in\mathbb{X}_{m}$.
By conditions $(f2)$-$(f4)$ we know that for any $\varepsilon>0$ there exists $C_{\varepsilon}>0$ such that
\begin{equation}\label{f}
|f(x,t)|\leq  2\varepsilon|t|+(p+1)C_{\varepsilon}|t|^{p} \quad \forall t\in{\R}, \, \forall x\in [0,T]^{N}
\end{equation}
and
\begin{equation}\label{F}
|F(x,t)|\leq  \varepsilon|t|^{2}+C_{\varepsilon}|t|^{p+1} \quad \forall t\in {\R}, \, \forall x\in [0,T]^{N}.
\end{equation}
Then, by Theorem \ref{compacttracethm}, follows that $\mathcal{J}_{m}$ is well defined on $\mathbb{X}_{m}$ and $\mathcal{J}_{m}\in \mathcal{C}^{1}(\mathbb{X}_{m},\R)$. 
%By assumptions on $f$ follows that  $\mathcal{J}_{m}$ is well defined on $\mathbb{X}_{m}$ and $\mathcal{J}_{m}\in \mathcal{C}^{1}(\mathbb{X}_{m},\R)$. 
By using Theorem \ref{tracethm}, we notice that the quadratic part of $\mathcal{J}_{m}$ is nonnegative, that is
\begin{equation}\label{partequad}
||v||^{2}_{\mathbb{X}_{m}} -m^{2s}|v(\cdot,0)|_{L^{2}(0,T)^{N}}^{2}\geq 0, 
\end{equation}
and the equality holds in (\ref{partequad}) if and only if 
\begin{equation}\label{eqpartequad}
v(x,\xi)=c\theta(m\xi) \mbox{ for some } c\in \R 
\end{equation}
(see Theorem $7$ in \cite{A3}).\\
As observed in \cite{A2, A3}, $\mathbb{X}_{m}$ can be splitted as
$$
\mathbb{X}_{m}=<\theta(m\xi)> \oplus \left\{ v\in \mathbb{X}_{m}: \int_{(0,T)^{N}} v(x,0) dx=0\right \}=:\mathbb{Y}_{m} \oplus \mathbb{Z}_{m}
$$
where $\dim \mathbb{Y}_{m}<\infty$ and $\mathbb{Z}_{m}$ is the orthogonal complement of $\mathbb{Y}_{m}$ with respect to the inner product in $\mathbb{X}_{m}$.\\
In order to find critical points of $\mathcal{J}_{m}$ we will make use of a suitable version of the Linking Theorem due to Li and Wang \cite{LW}. \\
Firstly we recall the following definitions.
\begin{defn}
Let $(X, ||\cdot||_{X})$ be a real Banach space with its dual space $(X', ||\cdot||_{X'})$, $J\in C^{1}(X,\R)$ and $c\in \R$. 
We say that $\{v_{n} \}\subset X$ is a Cerami sequence for $J$ at the level $c$ if 
$$
J(v_{n})\rightarrow c \mbox{ and } (1+||v_{n}||_{X})||J'(v_{n})||_{X'} \rightarrow 0.
$$
\end{defn}
\begin{defn}
We say that $J$ satisfies the $(C)_{c}$ condition if any Cerami sequence at the level $c$ has a strongly convergent subsequence.
\end{defn}

\noindent
Now we are ready to state the following
\begin{thm} \cite{LW}\label{Linking Thm}
Let $(X, ||\cdot||_{X})$ be a real Banach space with $X=Y\bigoplus Z$, where $Y$ is finite dimensional. 
Let $\rho>r>0$ and let $z\in Z$ be a fixed element such that $||z||=r$. Define
\begin{align*}
M&:=\{v=y+\lambda z: y\in Y, ||v||_{X}\leq \rho, \lambda\geq 0\}, \\
M_{0}&:=\{v=y+\lambda z: y\in Y, ||v||_{X}= \rho, \lambda\geq 0 \mbox{ or } ||v||_{X}\leq \rho, \lambda=0 \}, \\
N_{r}&:=\{ v\in Z: ||v||_{X}=r \}.
\end{align*}
Let $J\in \mathcal{C}^{1}(X,\R)$ be such that
$$
b:=\inf_{N_{r}} J>a:=\max_{M_{0}} J.
$$
Then $\alpha \geq b$ and there exists a $(C)_{c}$ sequence of $J$ where
\begin{equation}
\alpha:=\inf_{\gamma \in \Gamma} \max_{v\in M} J(\gamma(v)) \mbox{ and } \Gamma:=\{ \gamma \in C(M,X): \gamma=Id \mbox{ on } M_{0} \}.
\end{equation}
In particular, if $J$ satisfies the $(C)_{\alpha}$ condition, then $\alpha$ is a critical value of $J$.
\end{thm}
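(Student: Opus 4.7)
The plan is to prove the statement in two stages: first, establish the linking inequality $\alpha \geq b$ by a topological intersection argument; second, build a Cerami sequence at level $\alpha$ by a quantitative deformation lemma in which the descent vector field is weighted by $(1+||v||_{X})$, so as to match the Cerami penalty.

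For the linking step, I would fix $\gamma \in \Gamma$ and prove the intersection property $\gamma(M) \cap N_{r} \neq \emptyset$. Let $P_{Y}$ and $P_{Z}$ be the continuous projections associated with the splitting $X = Y \oplus Z$, and consider the continuous map
$$
\Phi: M \to Y \times \R, \qquad v = y + \lambda z \longmapsto \bigl(P_{Y} \gamma(v), \, ||P_{Z}\gamma(v)||_{X} - r\bigr).
$$
On $M_{0}$ one has $\gamma = \mathrm{Id}$: on the face $\{\lambda = 0\}$ the second component equals $-r \neq 0$, while on the face $\{||v||_{X} = \rho\}$ the strict inequality $\rho > r$ prevents $\Phi$ from vanishing. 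Since $M$ is homeomorphic to a half-ball in the finite-dimensional space $Y \oplus \R z$, a Brouwer degree computation yields a zero of $\Phi$ in the interior of $M$, that is, a point of $\gamma(M) \cap N_{r}$. Consequently $\max_{v \in M} J(\gamma(v)) \geq \inf_{N_{r}} J = b$, and $\alpha \geq b$ follows.

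For the Cerami sequence I would argue by contradiction. If none exists at level $\alpha$, there are $\varepsilon_{0}, \delta > 0$ such that $(1+||v||_{X}) \, ||J'(v)||_{X'} \geq \delta$ on the strip $A := J^{-1}([\alpha - \varepsilon_{0}, \alpha + \varepsilon_{0}])$; shrinking $\varepsilon_{0}$ further so that $\alpha - \varepsilon_{0} > a$ is possible by the hypothesis $a < b \leq \alpha$. Pick a pseudo-gradient field $V$ for $J$ on the regular set and a locally Lipschitz cut-off $\chi$ equal to $1$ on $J^{-1}([\alpha - \varepsilon_{0}/2, \alpha + \varepsilon_{0}/2])$ and $0$ outside $A$, then define
$$
W(v) := -\chi(v) \, \frac{(1+||v||_{X}) \, V(v)}{||V(v)||_{X}}.
$$
The bound $||W(v)||_{X} \leq 1 + ||v||_{X}$ and Gronwall's lemma guarantee that the flow $\eta(t, \cdot)$ of $W$ exists globally in time, and the pseudo-gradient inequality gives $\frac{d}{dt} J(\eta(t,v)) \leq -\delta/2$ wherever $\chi = 1$. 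Hence, for some $T_{*} > 0$, the map $\eta(T_{*}, \cdot)$ pushes $\{J \leq \alpha + \varepsilon_{0}/2\}$ into $\{J \leq \alpha - \varepsilon_{0}/2\}$. Choosing $\gamma_{0} \in \Gamma$ with $\max_{M} J \circ \gamma_{0} \leq \alpha + \varepsilon_{0}/2$ and composing, one obtains $\eta(T_{*}, \cdot) \circ \gamma_{0} \in \Gamma$: indeed $J \leq a < \alpha - \varepsilon_{0}$ on $M_{0}$, so $\chi \equiv 0$ there and the flow fixes $M_{0}$ pointwise. The resulting competitor contradicts the definition of $\alpha$, producing the required Cerami sequence; then $(C)_{\alpha}$ yields a critical point at level $\alpha$.

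The main obstacle, and the reason the ordinary Palais-Smale deformation is insufficient, is the possible unboundedness of $||J'||_{X'}$ along minimizing sequences: without the $(1+||v||_{X})$ weighting the natural descent field $-V/||V||$ produces a flow that may fail to exist for all $t$. Inserting exactly the factor $(1+||v||_{X})$, which mirrors the weighting in the Cerami condition, is what both prevents finite-time blow-up (via the linear growth bound on $W$) and extracts the quantitative descent $\delta/2$ from the hypothesis. The finite dimensionality of $Y$ is essential only for the degree argument in the linking step; once $\alpha \geq b$ is in hand, the deformation construction is infinite-dimensional and uses nothing beyond a pseudo-gradient and the strict separation $a < b$.
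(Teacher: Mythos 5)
The paper does not prove this theorem; it is quoted verbatim from Li--Wang \cite{LW}, and your sketch reconstructs essentially the argument given there: a Brouwer-degree intersection lemma for the linking inequality $\alpha\geq b$, followed by a quantitative deformation lemma whose descent field carries the weight $(1+||v||_{X})$ so that the flow is globally defined and the negated Cerami condition yields the uniform descent rate. Both halves of your outline are correct, so nothing further is needed beyond the routine verifications (local Lipschitz regularity of the truncated field and the fact that trajectories issuing from $M_{0}$ remain in $\{J\leq a\}$, hence are fixed by the deformation).
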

\noindent
Then, we are going to verify that $\mathcal{J}_{m}$ satisfies the assumptions of the above Theorem \ref{Linking Thm}. We begin proving a series of lemmas, which ensure us that $\mathcal{J}_{m}$ possesses a Linking geometry.
%We begin proving the following Lemmas:
\begin{lem}\label{lemma3}
$\mathcal{J}_{m}\leq 0 \mbox{ on } \mathbb{Y}_{m}$.
\end{lem}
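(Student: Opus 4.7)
The plan is to exploit the fact that $\mathbb{Y}_{m}=\langle \theta(m\xi)\rangle$ is one-dimensional, thereby reducing $\mathcal{J}_{m}|_{\mathbb{Y}_{m}}$ to a single integral of $F$ against a constant, and then to show that this integral is always nonnegative thanks to $(f3)$ and $(f6)$.

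First I would take $v\in\mathbb{Y}_{m}$ and write $v(x,\xi)=c\,\theta(m\xi)$ for some $c\in\R$. From (\ref{ccv}) we have $\theta(0)=1$, so the trace is the constant function $v(\cdot,0)\equiv c$ and $|v(\cdot,0)|_{L^{2}(0,T)^{N}}^{2}=c^{2}T^{N}$. Since $v$ is exactly of the form (\ref{eqpartequad}) that saturates the sharp inequality (\ref{partequad}), the quadratic part of $\mathcal{J}_{m}$ vanishes on $\mathbb{Y}_{m}$ and
\[
\mathcal{J}_{m}(v)=-\int_{(0,T)^{N}} F(x,c)\,dx.
\]

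The remaining point, which is the only real content of the lemma, is to establish $F(x,t)\geq 0$ pointwise on $\R^{N}\times\R$. Choosing the parameter in $(f6)$ equal to $0$ and using $G(x,0)=0$, one gets $G(x,t)=f(x,t)t-2F(x,t)\geq 0$ for every $(x,t)$. For $t\neq 0$ this yields
\[
\frac{d}{dt}\!\left[\frac{F(x,t)}{t^{2}}\right]=\frac{G(x,t)}{t^{3}},
\]
which is $\geq 0$ on $(0,\infty)$ and $\leq 0$ on $(-\infty,0)$; hence $t\mapsto F(x,t)/t^{2}$ is monotone on each half-line. By $(f3)$, $F(x,t)/t^{2}\to 0$ as $t\to 0$, and combining with the monotonicity forces $F(x,t)/t^{2}\geq 0$ on each side of $0$. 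Therefore $F(x,t)\geq 0$ everywhere, and integrating gives $\mathcal{J}_{m}(v)\leq 0$.

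The only (modest) obstacle is the sign of $F$: under (AR) one gets $F\geq 0$ essentially for free up to an additive constant, but here the weaker hypothesis $(f6)$ must be used together with $(f3)$ via the monotonicity of $F(x,t)/t^{2}$ as above. All other ingredients are direct consequences of the one-dimensional structure of $\mathbb{Y}_{m}$ and the equality case of the sharp trace inequality.
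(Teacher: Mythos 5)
Your proof is correct and follows essentially the same route as the paper: the quadratic part vanishes on $\mathbb{Y}_{m}$ by the equality case of the trace inequality, and $F\geq 0$ is obtained from $(f6)$ with parameter $0$ (giving $G\geq 0$) together with the monotonicity of $t\mapsto F(x,t)/t^{2}$ on each half-line and its vanishing limit at $0$ from $(f3)$. Your treatment of the negative half-line is in fact slightly more explicit than the paper's, which only says "arguing similarly".
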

\begin{proof}
Firstly we show that $F(x,t) \geq 0$ for any $x\in \R^{N}$ and $t\in \R$.
By $(f6)$ we deduce that $f(x,t)-2F(x,t)\geq 0$ for any $t\geq 0$.
Let $t>0$. For $x\in \R^{N}$ we have
$$
\frac{\partial}{\partial t}\Bigl(\frac{F(x,t)}{t^{2}}\Bigr)=\frac{t^{2}f(x,t)-2tF(x,t)}{t^{4}}\geq 0.
$$
By $(f2)$ we know that 
$$
\lim_{t\rightarrow 0} \frac{F(x,t)}{t^{2}}=0
$$ 
so we deduce that $F(x,t) \geq 0$ for $t\geq 0$.  Arguing similarly for the case $t\leq 0$, eventually we obtain that $F\geq 0$ in $\R^{N}\times \R$.
As a consequence, recalling that $\displaystyle{||v||^{2}_{\mathbb{X}_{m}}=m^{2s}|v(\cdot,0)|_{L^{2}(0,T)^{N}}^{2}}$ for any $v\in \mathbb{Y}_{m}$ by (\ref{eqpartequad}), we can see that
$$
\mathcal{J}_{m}(v)=-\int_{(0,T)^{N}} F(x,v) dx \leq 0.
$$
\end{proof}

\begin{lem}\label{lemma4}
There exists $r_{m}>0$ such that
$$
b_{m}:=\inf \{\mathcal{J}_{m}(v): v\in \mathbb{Z}_{m}, ||v||_{\mathbb{X}_{m}}=r_{m}\}>0.
$$
\end{lem}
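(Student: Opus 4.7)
The plan is to show that the quadratic part of $\mathcal{J}_m$ controls $\|v\|_{\mathbb{X}_m}^2$ from below on $\mathbb{Z}_m$, and then to absorb the superlinear correction by taking $r_m$ small. The constraint $\int_{(0,T)^N} v(x,0)\,dx=0$ defining $\mathbb{Z}_m$ is exactly the vanishing of the zero Fourier mode $c_0$ of the trace $v(\cdot,0)=\sum_{k\in\Z^N}c_k\,e^{\imath\omega k\cdot x}/\sqrt{T^N}$, and this opens a spectral gap in the trace inequality (\ref{tracein}).

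Since $\kappa_s=1$ by the normalization, Theorem \ref{tracethm} yields $\|v\|_{\mathbb{X}_m}^2\geq|v(\cdot,0)|_{\mathbb{H}_m}^2=\sum_{k\neq 0}(\omega^2|k|^2+m^2)^s|c_k|^2$ for every $v\in\mathbb{Z}_m$. For a parameter $\lambda\in(0,1)$ to be chosen,
\begin{align*}
\|v\|_{\mathbb{X}_m}^2-m^{2s}|v(\cdot,0)|_{L^2(0,T)^N}^2
&\geq (1-\lambda)\|v\|_{\mathbb{X}_m}^2+\lambda|v(\cdot,0)|_{\mathbb{H}_m}^2-m^{2s}|v(\cdot,0)|_{L^2(0,T)^N}^2\\
&=(1-\lambda)\|v\|_{\mathbb{X}_m}^2+\sum_{k\neq 0}\bigl[\lambda(\omega^2|k|^2+m^2)^s-m^{2s}\bigr]|c_k|^2.
\end{align*}
Taking $\lambda=m^{2s}/(\omega^2+m^2)^s\in(0,1)$ makes the bracket nonnegative for every $|k|\geq 1$, so setting $\delta_m:=1-m^{2s}/(\omega^2+m^2)^s>0$ we obtain $\|v\|_{\mathbb{X}_m}^2-m^{2s}|v(\cdot,0)|_{L^2(0,T)^N}^2\geq\delta_m\|v\|_{\mathbb{X}_m}^2$ for all $v\in\mathbb{Z}_m$.

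For the remainder, the growth bound (\ref{F}) combined with the trace embeddings of Theorem \ref{compacttracethm} (applied with $q=2$ and $q=p+1$) produces constants $C_1,C_2>0$ with $\int_{\partial^0\mathcal{S}_T}F(x,v)\,dx\leq\varepsilon C_1\|v\|_{\mathbb{X}_m}^2+C_\varepsilon C_2\|v\|_{\mathbb{X}_m}^{p+1}$. Choosing $\varepsilon$ so that $\varepsilon C_1\leq\delta_m/4$ then gives $\mathcal{J}_m(v)\geq(\delta_m/4)\|v\|_{\mathbb{X}_m}^2-C'\|v\|_{\mathbb{X}_m}^{p+1}$ on $\mathbb{Z}_m$. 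Since $p>1$, any sufficiently small $r_m>0$ makes the right-hand side strictly positive on the sphere $\{\|v\|_{\mathbb{X}_m}=r_m\}$, hence $b_m>0$. The main obstacle is the spectral-gap step: (\ref{tracein}) is an equality only on extensions of solutions of the homogeneous problem, so one cannot discard the $m^{2s}$ term outright, and the convex-combination device above is what secures coercivity in $\|v\|_{\mathbb{X}_m}$ rather than merely in the trace seminorm $|v(\cdot,0)|_{\mathbb{H}_m}$.
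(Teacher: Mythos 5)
Your proof is correct, and its overall architecture is the same as the paper's: establish a coercivity estimate $\|v\|_{\mathbb{X}_m}^2-m^{2s}|v(\cdot,0)|_{L^2(0,T)^N}^2\geq C_m\|v\|_{\mathbb{X}_m}^2$ on $\mathbb{Z}_m$, then absorb the nonlinear term via (\ref{F}) and the trace embeddings of Theorem \ref{compacttracethm}, and take $r_m$ small using $p>1$. The difference is where that coercivity comes from: the paper simply imports it as inequality (\ref{eqnorm}), citing Lemma 3 of \cite{A3}, whereas you prove it from scratch by combining the trace inequality (\ref{tracein}) (with $\kappa_s=1$) with the spectral gap created by the vanishing of the zero Fourier mode on $\mathbb{Z}_m$; your convex-combination device with $\lambda=m^{2s}/(\omega^2+m^2)^s$ is exactly the right way to convert the gap in the trace seminorm into coercivity in the full $\mathbb{X}_m$-norm, and it has the added benefit of producing the explicit constant $\delta_m=1-m^{2s}/(\omega^2+m^2)^s$, which makes transparent how the constant degenerates as $\omega\to 0$ and behaves as $m\to 0$ (compare the uniform-in-$m$ estimates of Section 5, which use the cruder bound (\ref{intr*})). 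So your argument is self-contained where the paper's is not, at the cost of a few extra lines; both are valid.
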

\begin{proof}
By using Lemma $3$ in \cite{A3} we know that there is a constant $C_{m}>0$ such that
\begin{align}\label{eqnorm}
||v||_{\mathbb{X}_{m}}^{2}-m^{2s}|v(\cdot,0)|_{L^{2}(0,T)^{N}}^{2}\geq C_{m}||v||_{\mathbb{X}_{m}}^{2}
\end{align}
for any $v\in \mathbb{Z}_{m}$.
Then, taking into account $(\ref{F})$, $(\ref{eqnorm})$ and Theorem $4$ we have
\begin{align*}
\mathcal{J}_{m}(v)&\geq C_{m}||v||_{\mathbb{X}_{m}}^{2}-\varepsilon|v(\cdot,0)|_{L^{2}(0,T)^{N}}^{2}-C_{\varepsilon} |v(\cdot,0)|_{L^{p+1}(0,T)^{N}}^{p+1} \\
&\geq \Bigl(C_{m}-\frac{\varepsilon}{m}\Bigr)||v||_{\mathbb{X}_{m}}^{2}-C_{\varepsilon,m}||v||_{\mathbb{X}_{m}}^{p+1}
\end{align*}
for any $v\in \mathbb{Z}_{m}$.
Choosing $\varepsilon \in (0,mC_{m})$, we can find $r_{m}>0$ such that
$$
b_{m}:=\inf \{\mathcal{J}_{m}(v):  v\in \mathbb{Z}_{m} \mbox{ and } ||v||_{\mathbb{X}_{m}}=r_{m}  \}>0.
$$ 
\end{proof}

\begin{lem}\label{lemma5}
There exists $\rho_{m}>r_{m}$ and $z\in \mathbb{Z}_{m}$ with $||z||_{\mathbb{X}_{m}}=r_{m}$ such that,
denoted by
$$
M^{m}=\{v=y+\lambda z: y\in \mathbb{Y}_{m}, ||v||_{\mathbb{X}_{m}}\leq \rho_{m}, \lambda\geq 0 \} 
$$
and
$$
M^{m}_{0}=\{v=y+\lambda z: y\in \mathbb{Y}_{m}, ||v||_{\mathbb{X}_{m}}= \rho_{m}, \lambda\geq 0 \mbox{ or } ||v||_{\mathbb{X}_{m}}\leq \rho_{m}, \lambda=0 \}, 
$$
we have
$$
\max_{M^{m}_{0}} \mathcal{J}_{m}(v)\leq 0. 
$$
\end{lem}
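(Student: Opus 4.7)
The plan is to treat the two pieces of $M^m_0$ separately. The base $\{\lambda=0\}$ consists entirely of elements of $\mathbb{Y}_m$, so $\mathcal{J}_m \le 0$ there is already given by Lemma \ref{lemma3}. All the work therefore goes into the cap $\{\|v\|_{\mathbb{X}_m}=\rho_m,\ \lambda\ge 0\}$, where the goal is to force $\mathcal{J}_m(y+\lambda z)\to -\infty$ as $\|y+\lambda z\|_{\mathbb{X}_m}\to\infty$ along the two-dimensional cone $\mathbb{Y}_m\oplus \R_+ z$, and then choose $\rho_m$ large.

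First I would choose $z\in\mathbb{Z}_m$, normalized to $\|z\|_{\mathbb{X}_m}=r_m$, with $|z(\cdot,0)|_{L^2(0,T)^N}>0$ (for example as the extension of any zero-mean, nontrivial element of $\mathbb{H}_m$). Writing $v=y+\lambda z$, two orthogonality identities simplify the energy: the inner product on $\mathbb{X}_m$ gives $\|v\|_{\mathbb{X}_m}^{2}=\|y\|_{\mathbb{X}_m}^{2}+\lambda^{2}r_m^{2}$; and since $y(\cdot,0)$ is constant (because $y=c\,\theta(m\xi)$ and $\theta(0)=1$) while $z(\cdot,0)$ has zero mean on $(0,T)^N$, one also has $|v(\cdot,0)|_{L^2}^{2}=|y(\cdot,0)|_{L^2}^{2}+\lambda^{2}|z(\cdot,0)|_{L^2}^{2}$. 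Combined with (\ref{eqpartequad}), which gives $\|y\|_{\mathbb{X}_m}^2=m^{2s}|y(\cdot,0)|_{L^2}^2$, the $y$-contribution to the quadratic part of $\mathcal{J}_m$ cancels, leaving
$$\mathcal{J}_m(v)=\frac{\lambda^{2}}{2}\bigl(r_m^{2}-m^{2s}|z(\cdot,0)|_{L^2}^{2}\bigr)-\int_{\partial^0\mathcal{S}_T} F(x,v(x,0))\,dx.$$

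Next I would use the superquadratic hypothesis $(f5)$, together with $F\ge 0$ (proved inside Lemma \ref{lemma3}), to obtain, for any $L>0$, a constant $C_L>0$ with $F(x,t)\ge L\,t^{2}-C_L$ for every $(x,t)$. Inserting this and expanding yields
$$\mathcal{J}_m(v)\le -L\,|y(\cdot,0)|_{L^2}^{2}+\lambda^{2}\Bigl[\tfrac{r_m^{2}}{2}-\bigl(\tfrac{m^{2s}}{2}+L\bigr)|z(\cdot,0)|_{L^2}^{2}\Bigr]+C_L\,T^{N}.$$
Choosing $L$ large enough that the bracket is a strictly negative number $-\delta$, and invoking $|y(\cdot,0)|_{L^2}^2=\|y\|_{\mathbb{X}_m}^2/m^{2s}$ once more, gives a bound of the form $\mathcal{J}_m(v)\le -c(\|y\|_{\mathbb{X}_m}^{2}+\lambda^{2})+C_L T^N$ with $c>0$. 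Since $\|v\|_{\mathbb{X}_m}^{2}=\|y\|_{\mathbb{X}_m}^{2}+\lambda^{2}r_m^{2}$, the right-hand side tends to $-\infty$ as $\|v\|_{\mathbb{X}_m}\to\infty$ on the cone, so a sufficiently large $\rho_m>r_m$ makes $\mathcal{J}_m\le 0$ on the cap. Combining with the base estimate from Lemma \ref{lemma3} gives the claim.

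The only delicate point is the choice of $z$: one must ensure $|z(\cdot,0)|_{L^2}>0$, because the coefficient of $\lambda^{2}$ must be made negative. This rules out taking $z$ as an arbitrary extension-type function with vanishing trace; however, any $z$ obtained as the extension of a nonzero zero-mean function of $\mathbb{H}_m$ automatically satisfies this, so the obstruction is easily avoided.
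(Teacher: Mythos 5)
Your proof is correct and follows essentially the same route as the paper: split off the $\lambda=0$ base via Lemma \ref{lemma3}, use the $\mathbb{X}_m$- and $L^2$-orthogonality of the decomposition $v=y+\lambda z$ together with $\|y\|_{\mathbb{X}_m}^2=m^{2s}|y(\cdot,0)|_{L^2}^2$, and drive $\mathcal{J}_m\to-\infty$ on the cone using $F(x,t)\ge A t^2-B_A$ from $(f5)$. The only differences are cosmetic: you cancel the $y$-contribution exactly where the paper instead proves the cruder bound $\|v\|_{\mathbb{X}_m}^2\le \bar{C}(m,s)|v(\cdot,0)|_{L^2}^2$ on the cone, and you allow a general $z$ with nonvanishing trace where the paper fixes the explicit $w=\prod_i\sin(\omega x_i)/(\xi+1)$ (a choice that only becomes important later, in Section 5, for uniformity in $m$).
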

\begin{proof}
By Lemma \ref{lemma3} we know that $\mathcal{J}_{m}(v)\leq 0$ on $\mathbb{Y}_{m}$.\\
Let us define
\begin{equation}\label{defw}
w=\prod_{i=1}^{N} \sin(\omega x_{i}) \frac{1}{\xi+1}.
\end{equation}
We observe that $w\in \mathbb{Z}_{m}$ (since $\int_{0}^{T} \sin(\omega x) dx=0$) and 
\begin{align}\label{w0}
||w||^{2}_{\mathbb{X}_{m}}&=N\Bigl(\prod_{i=1}^{N-1} \int_{0}^{T} \sin^{2}(\omega x_{i}) dx_{i}\Bigr)
\omega^{2} \Bigl( \int_{0}^{T} \cos^{2}(\omega x) dx\Bigr)\Bigl(\int_{0}^{\infty} \xi^{1-2s} \frac{d\xi}{(\xi+1)^{2}} \Bigr) \nonumber \\
&+\Bigl(\prod_{i=1}^{N} \int_{0}^{T} \sin^{2}(\omega x_{i}) dx_{i} \Bigr) \Bigl(\int_{0}^{\infty} \xi^{1-2s} \frac{d\xi}{(\xi+1)^{4}} \Bigr) \nonumber\\
&+m^{2}\Bigl(\prod_{i=1}^{N} \int_{0}^{T} \sin^{2}(\omega x_{i}) dx_{i} \Bigr) \Bigl(\int_{0}^{\infty} \xi^{1-2s} \frac{d\xi}{(\xi+1)^{2}} \Bigr).
\end{align}
Since
\begin{align*}
|w(\cdot,0)|_{L^{2}(0,T)^{N}}^{2}=\prod_{i=1}^{N} \int_{0}^{T} \sin^{2}(\omega x_{i}) dx_{i}
\end{align*}
and 
$$
\int_{0}^{T} \sin^{2}(\omega x) dx=\frac{T}{2}=\int_{0}^{T} \cos^{2}(\omega x) dx,
$$
by (\ref{w0}) follows that there exist $C_{1}, C_{2}, C_{3}>0$ (independent of $m$) such that 
\begin{equation}\label{einstein}
C_{1}|w(\cdot,0)|_{L^{2}(0,T)^{N}}^{2}\leq ||w||^{2}_{\mathbb{X}_{m}}\leq (C_{2}+m^{2}C_{3})|w(\cdot,0)|_{L^{2}(0,T)^{N}}^{2}.
\end{equation}

\noindent
Now, let $\displaystyle{z=\frac{r_{m}w}{||w||_{\mathbb{X}_{m}}}}$.
It is clear that $z\in \mathbb{Z}_{m}$ and $||z||_{\mathbb{X}_{m}}=r_{m}$. \\
Moreover, by (\ref{einstein}) we obtain
\begin{equation}\label{einstein1}
\frac{||z||^{2}_{\mathbb{X}_{m}}}{C_{2}+m^{2}C_{3}}=\frac{r^{2}_{m}}{C_{2}+m^{2}C_{3}}\leq |z(\cdot,0)|_{L^{2}(0,T)^{N}}^{2}\leq \frac{r^{2}_{m}}{C_{1}}=\frac{||z||^{2}_{\mathbb{X}_{m}}}{C_{1}}.
\end{equation}

\noindent
Take $\displaystyle{v=y+\lambda z\in \mathbb{Y}_{m}\oplus \R_{+} z}$. 
We recall that if $y\in \mathbb{Y}_{m}$ then $y(x,\xi)=d_{m}\theta(m\xi)$, $d_{m}\in \R$ and $||y||_{\mathbb{X}_{m}}^{2}=m^{2s}|y(\cdot,0)|_{L^{2}(0,T)^{N}}^{2}$. \\
Then, by using  (\ref{einstein1}), we have 
\begin{align}\label{einstein2}
||v||_{\mathbb{X}_{m}}^{2}&=||y||_{\mathbb{X}_{m}}^{2}+\lambda^{2}||z||_{\mathbb{X}_{m}}^{2} \nonumber \\
&=m^{2s}|y(\cdot,0)|_{L^{2}(0,T)^{N}}^{2}+\lambda^{2}||z||_{\mathbb{X}_{m}}^{2}  \nonumber \\
&\leq \max\{m^{2s},1\} [|y(\cdot,0)|_{L^{2}(0,T)^{N}}^{2}+\lambda^{2}||z||_{\mathbb{X}_{m}}^{2}]  \nonumber \\
&\leq \max\{m^{2s},1\} \{|y(\cdot,0)|_{L^{2}(0,T)^{N}}^{2}+\lambda^{2}[C_{2}+m^{2}C_{3}]|z(\cdot,0)|_{L^{2}(0,T)^{N}}^{2}\} \nonumber \\
&\leq \max\{m^{2s},1\} \max\{1,C_{2}+m^{2}C_{3} \} \{|y(\cdot,0)|_{L^{2}(0,T)^{N}}^{2}+\lambda^{2}|z(\cdot,0)|_{L^{2}(0,T)^{N}}^{2}\} \nonumber \\
&=:\bar{C}(m,s) |v(\cdot,0)|_{L^{2}(0,T)^{N}}^{2}.
\end{align}

\noindent
By using $(f2)$ and $(f5)$, we know that for any  $A>0$ there exists $B_{A}>0$  such that  
\begin{equation}\label{F*}
F(x,t)\geq A|t|^{2}-B_{A} \quad \forall t\in{\R}, \, \forall x\in [0, T]^{N}.
\end{equation}
Fix $A>\frac{\bar{C}(m,s)}{2}$.
Taking into account (\ref{einstein2}) and (\ref{F*}), we get for any $v=y+\lambda z\in \mathbb{Y}_{m}\oplus \R_{+} z$
\begin{align}
\mathcal{J}_{m}(v)&=\frac{1}{2}||v||_{\mathbb{X}_{m}}^{2}-\frac{m^{2s}}{2}|v(\cdot,0)|_{L^{2}(0,T)^{N}}^{2}-\int_{\partial^{0}\mathcal{S}_{T}} F(x,v) dx \nonumber \\
&\leq \frac{1}{2}||v||_{\mathbb{X}_{m}}^{2}-A|v(\cdot,0)|_{L^{2}(0,T)^{N}}^{2}+B_{A}T^{N} \nonumber \\
&\leq \Bigl[\frac{1}{2}-\frac{A}{\bar{C}(m,s)}\Bigr]||v||_{\mathbb{X}_{m}}^{2}+B_{A}T^{N}  \nonumber \rightarrow -\infty
\end{align}
as $||v||_{\mathbb{X}_{m}}=\rho_{m}\rightarrow \infty$.\\
Take $\rho_{m}$ large enough, $r_{m}$ small enough with $\rho_{m}>1>r_{m}>0$. \\
Then
$$
\max_{M^{m}_{0}} \mathcal{J}_{m}(v)\leq 0
$$
and
$$
b_{m}:=\inf_{N^{m}_{r_{m}}} \mathcal{J}_{m}>a_{m}:=\max_{M^{m}_{0}} \mathcal{J}_{m}.
$$
\end{proof}

\noindent
Finally we show that $\mathcal{J}_{m}$ satisfies the Cerami condition:

\begin{lem}\label{lemma6}
Let $c \in \R$. Let $\{v_{j}\}\subset \mathbb{X}_{m}$ be a sequence such that 
\begin{equation}\label{3.17}
\mathcal{J}_{m}(v_{j}) \rightarrow c 
\end{equation}
and
\begin{equation}\label{3.18}
(1+ ||v_{j}||_{\mathbb{X}_{m}}) ||\mathcal{J}_{m}'(v_{j})||_{\mathbb{X}^{*}_{m}} \rightarrow 0
\end{equation}
as $j\rightarrow \infty$.
Then there exists a subsequence $\{v_{j_{h}}\}\subset \{v_{j}\}$ and $v \in \mathbb{X}_{m}$ such that $v_{j_{h}} \rightarrow v$ in $\mathbb{X}_{m}$.
\end{lem}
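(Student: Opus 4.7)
The heart of the matter is establishing boundedness of the Cerami sequence $\{v_{j}\}$; once this is known, strong convergence follows by the standard compact embedding argument in Theorem \ref{compacttracethm}, together with the growth condition $(f4)$ and the quadratic part of $\mathcal{J}_{m}$.

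The plan is to argue boundedness by contradiction via the Jeanjean-type trick enabled by $(f6)$. Suppose $\|v_{j}\|_{\mathbb{X}_{m}}\to\infty$ and set $w_{j}:=v_{j}/\|v_{j}\|_{\mathbb{X}_{m}}$. Up to a subsequence, $w_{j}\rightharpoonup w$ in $\mathbb{X}_{m}$, and by Theorem \ref{compacttracethm}, $w_{j}(\cdot,0)\to w(\cdot,0)$ in $L^{q}(0,T)^{N}$ for all $1\le q<2^{\sharp}_{s}$ and a.e.\ on $(0,T)^{N}$. I would split into two cases depending on whether the trace of $w$ vanishes.

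\emph{Case 1: $w(\cdot,0)\equiv 0$.} Choose $t_{j}\in[0,1]$ with $\mathcal{J}_{m}(t_{j}v_{j})=\max_{t\in[0,1]}\mathcal{J}_{m}(tv_{j})$. For arbitrary $R>0$, since $R/\|v_{j}\|_{\mathbb{X}_{m}}\in[0,1]$ eventually, the inequality $|F(x,t)|\le\varepsilon t^{2}+C_{\varepsilon}|t|^{p+1}$ from (\ref{F}) together with $w_{j}(\cdot,0)\to 0$ in $L^{2}$ and $L^{p+1}$ yields
\[
\mathcal{J}_{m}(t_{j}v_{j})\ge \mathcal{J}_{m}(Rw_{j})=\tfrac{R^{2}}{2}-\tfrac{m^{2s}R^{2}}{2}|w_{j}(\cdot,0)|_{L^{2}(0,T)^{N}}^{2}-\int_{\partial^{0}\mathcal{S}_{T}}F(x,Rw_{j})\,dx\to\tfrac{R^{2}}{2}.
\]
Hence $\mathcal{J}_{m}(t_{j}v_{j})\to+\infty$, which forces $t_{j}\in(0,1)$ for large $j$ (since $\mathcal{J}_{m}(0)=0$ and $\mathcal{J}_{m}(v_{j})\to c$), so $\langle \mathcal{J}_{m}'(t_{j}v_{j}),t_{j}v_{j}\rangle=0$. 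Therefore
\[
2\mathcal{J}_{m}(t_{j}v_{j})=\int_{\partial^{0}\mathcal{S}_{T}}G(x,t_{j}v_{j})\,dx\le \gamma\int_{\partial^{0}\mathcal{S}_{T}}G(x,v_{j})\,dx=\gamma\bigl(2\mathcal{J}_{m}(v_{j})-\langle \mathcal{J}_{m}'(v_{j}),v_{j}\rangle\bigr),
\]
using $(f6)$. By (\ref{3.17}) and (\ref{3.18}) the right-hand side is bounded, contradicting $\mathcal{J}_{m}(t_{j}v_{j})\to+\infty$.

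\emph{Case 2: $w(\cdot,0)\not\equiv 0$.} On the positive-measure set $\Omega^{+}:=\{x\in(0,T)^{N}:w(x,0)\ne 0\}$ we have $|v_{j}(x,0)|\to\infty$ a.e., so $(f5)$ and Lemma \ref{lemma3} (which gives $F\ge 0$) imply $F(x,v_{j})/\|v_{j}\|_{\mathbb{X}_{m}}^{2}=[F(x,v_{j})/v_{j}^{2}]\,w_{j}(\cdot,0)^{2}\to+\infty$ a.e.\ on $\Omega^{+}$. Fatou's lemma then gives $\int_{\partial^{0}\mathcal{S}_{T}}F(x,v_{j})/\|v_{j}\|_{\mathbb{X}_{m}}^{2}\,dx\to+\infty$. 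On the other hand, dividing $\mathcal{J}_{m}(v_{j})=c+o(1)$ by $\|v_{j}\|_{\mathbb{X}_{m}}^{2}$ and using the continuous trace embedding yields $\int_{\partial^{0}\mathcal{S}_{T}}F(x,v_{j})/\|v_{j}\|_{\mathbb{X}_{m}}^{2}\,dx=\tfrac{1}{2}-\tfrac{m^{2s}}{2}|w_{j}(\cdot,0)|_{L^{2}(0,T)^{N}}^{2}+o(1)$, which is bounded. Contradiction. Hence $\{v_{j}\}$ is bounded in $\mathbb{X}_{m}$.

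Having established boundedness, I would extract $v_{j}\rightharpoonup v$ in $\mathbb{X}_{m}$ and $v_{j}(\cdot,0)\to v(\cdot,0)$ in $L^{q}(0,T)^{N}$ for $q<2^{\sharp}_{s}$ by Theorem \ref{compacttracethm}. Writing $\langle \mathcal{J}_{m}'(v_{j})-\mathcal{J}_{m}'(v),v_{j}-v\rangle=\|v_{j}-v\|_{\mathbb{X}_{m}}^{2}-m^{2s}|v_{j}(\cdot,0)-v(\cdot,0)|_{L^{2}(0,T)^{N}}^{2}-\int_{\partial^{0}\mathcal{S}_{T}}[f(x,v_{j})-f(x,v)](v_{j}-v)\,dx$, the nonlocal term vanishes via (\ref{f}), H\"older's inequality with exponents $p+1$ and $(p+1)/p$, and the strong convergence of traces (using $p+1<2^{\sharp}_{s}$). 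The $L^{2}$-term vanishes similarly, while the left-hand side tends to $0$ by (\ref{3.18}) and weak convergence. Hence $\|v_{j}-v\|_{\mathbb{X}_{m}}\to 0$, i.e.\ $\mathcal{J}_{m}$ satisfies $(C)_{c}$. The main obstacle is Case 1 of the boundedness argument, where the subtle coupling between the maximizing parameter $t_{j}$ and condition $(f6)$ must be carefully orchestrated.
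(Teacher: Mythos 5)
Your proposal is correct and follows essentially the same route as the paper: contradiction for boundedness via the normalized sequence, the two cases according to whether the trace of the weak limit vanishes (Jeanjean's $t_{j}$-maximization combined with $(f6)$ in the vanishing case, Fatou's lemma with $(f5)$ and $F\geq 0$ in the other), and then strong convergence from the compact trace embedding. The only differences are cosmetic: you use an arbitrary level $R$ instead of the paper's $r_{n}=2\sqrt{n}$, bound $\int F(x,Rw_{j})\,dx$ directly by (\ref{F}) rather than by dominated convergence, and close the argument with $\langle\mathcal{J}_{m}'(v_{j})-\mathcal{J}_{m}'(v),v_{j}-v\rangle\to 0$ instead of the paper's two separate pairings.
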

\begin{proof}

We start proving that $\{v_{j}\}$ is bounded in $\mathbb{X}_{m}$. We proceed as in \cite{Liu}.
We argue by contradiction and assume that 
\begin{equation}\label{3.19}
||v_{j}||_{\mathbb{X}_{m}}\rightarrow \infty
\end{equation} 
as $j\rightarrow +\infty$.\\
%Let us observe that by (\ref{3.18}) and (\ref{3.19}) follows that
%\begin{equation}\label{3.20}
%\sup \Bigl\{|\langle{\mathcal{J}'_{m}(v_{j}),\psi}\rangle|: \psi\in \mathbb{X}_{m}, ||\psi||_{\mathbb{X}_{m}}=1 \Bigr\} \rightarrow 0 \mbox{ as } j\rightarrow \infty.
%\end{equation}
Let us define
\begin{equation}\label{3.21}
z_{j}=\frac{v_{j}}{||v_{j}||_{\mathbb{X}_{m}}}.
\end{equation}
Then $||z_{j}||_{\mathbb{X}_{m}}=1$ and by using Theorem \ref{compacttracethm} we can assume, up to a subsequence,  that
\begin{align}\label{3.22}
z_{j} &\rightharpoonup z \mbox{ in } \mathbb{X}_{m}  \nonumber \\
z_{j}(\cdot,0) &\rightarrow z(\cdot,0) \mbox{ in } L^{q}(0,T)^{N} \mbox{ for any } q\in [2,2^{\s}_{s})\\
z_{j}(\cdot,0) &\rightarrow  z(\cdot,0) \mbox{ a.e in } (0,T)^{N} \nonumber
\end{align}
and there exists $h\in L^{p+1}(0,T)^{N}$ such that
\begin{equation}\label{3.23}
|z_{j}(x,0)|\leq h(x) \quad \mbox{ a.e. in } x\in (0,T)^{N}, \quad \mbox{ for all } j\in \N.
\end{equation}
Now we distinguish two cases.
Firstly we suppose that 
\begin{equation}\label{3.24}
z\equiv 0.
\end{equation}
%It is clear that for any $j\in \N$ there exists $t_{j}\in [0,1]$ such that
As in \cite{J}, we can choose $\{t_{j}\}_{j\in \N}\subset [0,1]$ such that
\begin{equation}\label{3.25}
\mathcal{J}_{m}(t_{j}v_{j})=\max_{t\in [0,1]} \mathcal{J}_{m}(t v_{j}).
\end{equation}
Since $||v_{j}||_{\mathbb{X}_{m}}\rightarrow \infty$ we can take $r_{n}=2\sqrt{n}$
such that
\begin{equation}\label{3.26}
r_{n}||v_{j}||_{\mathbb{X}_{m}}^{-1}\in (0,1)
\end{equation}
provided $j$ is large enough.
By (\ref{3.22}) and the continuity of $F$, we can see 
\begin{equation}\label{3.27}
F(x,r_{n}z_{j}(x,0))\rightarrow F(x,r_{n}z(x,0)) \mbox{ a.e. } x\in (0,T)^{N}
\end{equation}
as $j\rightarrow \infty$ and $n\in \N$. 
In particular, integrating $(f4)$ and exploiting (\ref{3.23}) we get
\begin{align}\label{3.28}
|F(x,r_{n}z_{j}(x,0))|&\leq c_{1}|r_{n}z_{j}(x,0)|+c_{2}|r_{n}z_{j}(x,0)|^{p+1} \nonumber \\
&\leq c_{1}r_{n} h(x)+c_{2}r_{n}^{p}h(x)^{p+1}\in L^{1}(0,T)^{N},
\end{align}
a.e. $x\in (0,T)^{N}$ and $n, j\in \N$.
Then, taking into account (\ref{3.27}), (\ref{3.28}) and by using the Dominated Convergence Theorem we deduce that 
\begin{equation}\label{3.29}
F(x,r_{n}z_{j}(x,0))\rightarrow F(x,r_{n}z(x,0)) \mbox{ in } L^{1}(0,T)^{N}.
\end{equation}
Since $F(\cdot,0)=0$ and (\ref{3.24}) holds true, (\ref{3.29}) yields
\begin{equation}\label{3.30}
\int_{\partial^{0}\mathcal{S}_{T}}F(x,r_{n}z_{j}) dx\rightarrow 0 \mbox{ as } j\rightarrow \infty
\end{equation}
for any $n\in \N$.
Then (\ref{3.22}), (\ref{3.25}), (\ref{3.26}) and (\ref{3.30}) imply
$$
\mathcal{J}_{m}(t_{j}v_{j})\geq \mathcal{J}_{m}(r_{n}z_{j})\geq 2n-\int_{\partial^{0}\mathcal{S}_{T}} F(x,r_{n}z_{j}) dx\geq n
$$
provided $j$ is large enough and for any $n\in \N$. 
As a consequence
\begin{equation}\label{3.31}
\mathcal{J}_{m}(t_{j}v_{j})\rightarrow \infty \mbox{ as } j\rightarrow \infty.
\end{equation}
Since $\mathcal{J}_{m}(0)=0$ and $\mathcal{J}_{m}(v_{j})\rightarrow c$ we deduce that $t_{j}\in (0,1)$. Thus, by (\ref{3.25}) we have
\begin{equation}\label{3.32}
\langle{\mathcal{J}'_{m}(t_{j}v_{j}),t_{j}v_{j}}\rangle=t_{j} \frac{d}{dt}\Bigr|_{t=t_{j}} \mathcal{J}_{m}(t v_{j})=0.
\end{equation} 
%Taking into account $(f6)$, (\ref{3.17}), (\ref{3.20}) and (\ref{3.22}) we get
Taking into account $(f6)$, (\ref{3.17}), (\ref{3.18}) and (\ref{3.32}) we get
\begin{align*}
\frac{2}{\gamma}\mathcal{J}_{m}(t_{j}v_{j})&=\frac{2}{\gamma}\Bigl(\mathcal{J}_{m}(t_{j}v_{j})- \frac{1}{2}\langle{\mathcal{J}'_{m}(t_{j}v_{j}),t_{j}v_{j}}\rangle \Bigr)\\
&=\frac{1}{\gamma} \int_{\partial^{0}\mathcal{S}_{T}}G(x,t_{j}v_{j}) dx \\
&\leq \int_{\partial^{0}\mathcal{S}_{T}}G(x,v_{j}) dx \\
&=2[\mathcal{J}_{m}(v_{j})-\frac{1}{2}\langle{\mathcal{J}'_{m}(v_{j}),v_{j}}\rangle] \rightarrow 2c \quad \mbox{ as } j\rightarrow \infty
\end{align*}
which contradicts (\ref{3.31}).\\
Secondly, we suppose that
\begin{equation}\label{zdiv0}
z\not\equiv 0.
\end{equation}
Thus the set $\Omega=\{x\in (0,T)^{N}: z(x,0)\neq 0\}$ has positive Lebesgue measure and by using (\ref{3.21}), (\ref{3.22}) and (\ref{zdiv0}) we get
\begin{equation}\label{3.34}
|v_{j}(x,0)|\rightarrow \infty \mbox{ a.e. } x\in \Omega \mbox{ as } j\rightarrow 0.
\end{equation}
By (\ref{3.17}), (\ref{3.19}) and $F\geq 0$ we can easily deduce that
\begin{equation}\begin{split}\label{3.35}
o(1)&=\frac{1}{2}-\frac{m^{2s}}{2}\frac{|v_{j}(\cdot,0)|_{L^{2}(0,T)^{N}}^{2}}{||v_{j}||^{2}_{\mathbb{X}_{m}}}-\int_{(0,T)^{N}}\frac{F(x,v_{j}(x,0))}{||v_{j}||^{2}_{\mathbb{X}_{m}}} dx  \\
%&\leq \frac{1}{2}-\int_{\Omega}\frac{F(x,v_{j}(x,0))}{||v_{j}||_{\mathbb{X}_{m}}} dx-\int_{(0,T)^{N} \setminus \Omega}\frac{F(x,v_{j}(x,0))}{||v_{j}||_{\mathbb{X}_{m}}} dx
&\leq \frac{1}{2}-\int_{\Omega}\frac{F(x,v_{j}(x,0))}{||v_{j}||^{2}_{\mathbb{X}_{m}}} dx \quad \mbox{ as } j\rightarrow \infty.
\end{split}
\end{equation}
%as $j\rightarrow \infty$.\\
Now, taking into account $(f5)$, (\ref{3.21}), (\ref{3.22}), (\ref{3.34}) and by using Fatou's Lemma we obtain
\begin{align}\label{3.36}
\int_{\Omega}\frac{F(x,v_{j}(x,0))}{||v_{j}||^{2}_{\mathbb{X}_{m}}} dx \rightarrow \infty
\mbox{ as } j\rightarrow \infty.
\end{align} 
%By using $(f5)$, $F$ is continuous in $[0,T]^{N}\times \R$ and $F(\cdot,0)=0$, we get
%\begin{equation*}
%F(x,t)\geq -K \mbox{ for any } (x,t)\in [0,T]^{N}\times \R
%\end{equation*}
%for some $K>0$, which implies that 
%\begin{equation}\label{3.37}
%\int_{(0,T)^{N} \setminus \Omega}\frac{F(x,v_{j}(x,0))}{||v_{j}||_{\mathbb{X}_{m}}} dx \geq - \frac{K}{||v_{j}||_{\mathbb{X}_{m}}} |(0,T)^{N} \setminus \Omega|. 
%\end{equation}
%Taking the limit as $j\rightarrow +\infty$ in (\ref{3.37}) we have
%\begin{equation}\label{3.38}
%\lim_{j\rightarrow +\infty}\int_{(0,T)^{N} \setminus \Omega}\frac{F(x,v_{j}(x,0))}{||v_{j}||_{\mathbb{X}_{m}}} dx \geq 0. 
%\end{equation}
%Putting together (\ref{3.35}), (\ref{3.36}) and (\ref{3.38}) we get a contradiction. \\
Putting together (\ref{3.35}) and (\ref{3.36}) we get a contradiction. \\
Thus the sequence $\{v_{j}\}$ is bounded in $\mathbb{X}_{m} $.
By Theorem \ref{compacttracethm} we can assume, up to a subsequence,  that
\begin{equation}\begin{split}\label{3.7}
v_{j} &\rightharpoonup v \mbox{ in } \mathbb{X}_{m}  \\
v_{j}(\cdot,0) &\rightarrow v(\cdot,0) \mbox{ in } L^{p+1}(0,T)^{N} \\
v_{j}(\cdot,0) &\rightarrow  v(\cdot,0) \mbox{ a.e in } (0,T)^{N} 
\end{split}
\end{equation}
as $j\rightarrow \infty$ and there exists $h\in L^{p+1}(0,T)^{N}$ such that
\begin{equation}\label{3.10}
|v_{j}(x,0)|\leq h(x) \quad \mbox{ a.e. in } x\in (0,T)^{N}, \,\mbox{ for all } j\in \N.
\end{equation}
Taking into account $(f2)$, $(f4)$, (\ref{3.7}), (\ref{3.10}) and the Dominated Convergence Theorem  we get
\begin{equation}\label{1}
\int_{\partial^{0}\mathcal{S}_{T}} f(x,v_{j}) v_{j} dx \rightarrow \int_{\partial^{0}\mathcal{S}_{T}} f(x,v) v dx
\end{equation}
and
\begin{equation}\label{2}
\int_{\partial^{0}\mathcal{S}_{T}} f(x,v_{j}) v dx \rightarrow \int_{\partial^{0}\mathcal{S}_{T}} f(x,v) v dx
\end{equation}
as $j \rightarrow \infty$.\\
By using (\ref{3.18}) and the boundedness of $\{v_{j}\}_{j\in \N}$ in $\mathbb{X}_{m}$, we deduce that 
$\langle  \mathcal{J}_{m}'(v_{j}),v_{j} \rangle \rightarrow 0$, that is
\begin{equation}\label{wc1}
||v_{j}||_{\mathbb{X}_{m}}^{2}- m^{2s}|v_{j}(\cdot,0)|_{L^{2}(0,T)^{N}}^{2}-\int_{\partial^{0}\mathcal{S}_{T}} f(x,v_{j}) v_{j} dx \rightarrow 0
\end{equation}
as $j\rightarrow \infty$.
By $(\ref{3.7})$, (\ref{1}) and (\ref{wc1}) we have
\begin{equation}\label{wc2}
||v_{j}||_{\mathbb{X}_{m}}^{2} \rightarrow m^{2s}|v(\cdot,0)|_{L^{2}(0,T)^{N}}^{2}-\int_{\partial^{0}\mathcal{S}_{T}} f(x,v) v dx.
\end{equation}
Moreover, by (\ref{3.18}) and $v \in \mathbb{X}_{m}$, we obtain $\langle  \mathcal{J}_{m}'(v_{j}),v \rangle \rightarrow 0$ as $j\rightarrow \infty$, that is
\begin{equation}\label{wc3}
\langle v_{j}, v \rangle_{\mathbb{X}_{m}}- m^{2s} \langle v_{j},v \rangle_{L^{2}(0,T)^{N}}-\int_{\partial^{0}\mathcal{S}_{T}} f(x,v_{j}) v dx \rightarrow 0
\end{equation}
Taking into account (\ref{3.7}), (\ref{3.10}), (\ref{2}) and (\ref{wc3}) we get
\begin{equation}\label{wc4}
||v||_{\mathbb{X}_{m}}^{2}= m^{2s}|v(\cdot,0)|_{L^{2}(0,T)^{N}}^{2}-\int_{\partial^{0}\mathcal{S}_{T}} f(x,v) v dx.
\end{equation}
Thus, (\ref{wc2}) and (\ref{wc4}) imply that 
\begin{equation}\label{wc5}
||v_{j}||_{\mathbb{X}_{m}}^{2}\rightarrow ||v||_{\mathbb{X}_{m}}^{2} \mbox{ as } j\rightarrow \infty.
\end{equation}
Since $\mathbb{X}_{m}$ is a Hilbert space, we have
$$
||v_{j}-v||_{\mathbb{X}_{m}}^{2}=||v_{j}||_{\mathbb{X}_{m}}^{2}+||v||_{\mathbb{X}_{m}}^{2}-2 \langle v_{j},v \rangle_{\mathbb{X}_{m}}
$$
and using $v_{j} \rightharpoonup v$ in $\mathbb{X}_{m}$ and (\ref{wc5})
we can conclude that $v_{j} \rightarrow v$ in $\mathbb{X}_{m}$, as $j \rightarrow \infty$.

\end{proof}

\begin{proof} [Proof of Theorem \ref{thm1}]
Taking into account Lemma \ref{lemma3} - Lemma \ref{lemma6} we can see that $\mathcal{J}_{m}$ satisfies the assumptions of Theorem \ref{Linking Thm}.
Then, we deduce that for any fixed $m>0$, there exists a function $v_{m}\in \mathbb{X}_{m}$ such that
\begin{equation}\label{criticalvalue}
\mathcal{J}_{m}(v_{m})=\alpha_{m} \mbox{ and } \mathcal{J}'_{m}(v_{m})=0, 
\end{equation}
where 
\begin{equation}\label{am}
\alpha_{m}:=\inf_{\gamma \in \Gamma_{m}} \max_{v\in M^{m}} \mathcal{J}_{m}(\gamma(v)) 
\end{equation}
and 
\begin{equation*}
\Gamma_{m}:=\{ \gamma \in C(M^{m},\mathbb{X}_{m}): \gamma=Id \mbox{ on } M^{m}_{0} \}.
\end{equation*}
Hence $v_{m}$ is a nontrivial weak solution to (\ref{R1}), and by Theorem $9$ in \cite{A3} follows that $v_{m}(\cdot,0)\in C^{0,\alpha}([0,T]^{N})$ for some $\alpha \in (0,1)$.
\end{proof}

\section{Periodic solution in the case $m=0$}
\noindent
In this last section, we show that it is possible to find a nontrivial weak solution to (\ref{P'}). In Section $4$ we proved that for any $m>0$ there exists $v_{m}\in \mathbb{X}_{m}$ such that 
\begin{align}\label{zero}
\mathcal{J}_{m}(v_{m})= \alpha_{m} \quad \mbox{ and } \quad \mathcal{J}'_{m}(v_{m})=0,
\end{align} 
where $\alpha_{m}$ is defined as in (\ref{am}). 
Now, we will prove that we can estimate from below and from above the critical levels of the functional $\mathcal{J}_{m}$  independently of $m$, when $m$ is sufficiently small.  This allows us to pass to the limit in (\ref{R1}) as $m\rightarrow 0$ and to prove the existence of a nontrivial solution to 
\begin{equation}\label{R'}
\left\{
\begin{array}{ll}
-\dive(\xi^{1-2s} \nabla v) =0 &\mbox{ in }\mathcal{S}_{T}:=(0,T)^{N} \times (0,\infty)  \\
v_{| {\{x_{i}=0\}}}= v_{| {\{x_{i}=T\}}} & \mbox{ on } \partial_{L}\mathcal{S}_{T}:=\partial (0,T)^{N} \times [0,\infty) \\
\frac{\partial v}{\partial \nu^{1-2s}}= f(x,v)   &\mbox{ on }\partial^{0}\mathcal{S}_{T}:=(0,T)^{N} \times \{0\}
\end{array}.
\right.
\end{equation}

\noindent
Let us assume that $\displaystyle{0<m<m_{0}:=\frac{\omega^{2s}}{2}}$. \\
Firstly we prove that there exists $K_{1}>0$ independent of $m$, such that
\begin{equation}\label{cmlambda}
\alpha_{m}\geq K_{1} \quad \mbox{ for all }  0<m<m_{0}. 
\end{equation}
In order to achieve our aim, we will estimate the $L^{q}$-norm of the trace of $v$, with $q\in [2,2^{\s}_{s})$. 
Let $v\in \mathbb{Z}_{m}$ and we denote by $c_{k}$ its Fourier coefficients.
By $c_{0}=0$ and Theorem \ref{tracethm} follow that 
\begin{align}\label{intr*}
|v(\cdot,0)|^{2}_{L^{2}(0,T)^{N}} & = \sum_{|k|\geq 1}|c_{k}|^{2} 
 \leq \frac{1}{\omega^{2s}} \sum_{|k|\geq 1} (\omega^{2}|k|^{2}+m^{2})^{s}|c_{k}|^{2} \nonumber \\
& =\frac{1}{\omega^{2s}} |v(\cdot,0)|^{2}_{\mathbb{H}_{m}}
\leq \frac{1}{\omega^{2s}} ||v||^{2}_{\mathbb{X}_{m}}.
\end{align}
\noindent
Now, fix $2<q<2^{\s}_{s}$ and we denote by $q'$ its conjugate exponent. 
Taking into account $c_{0}=0$, by using H\"older inequality and Theorem \ref{tracethm} we get 
%\begin{align}\label{series}
%|v(\cdot,0)|_{L^{p}(0,T)^{N}}&\leq \Bigl(\frac{1}{\sqrt{T^{N}}}\Bigr)^{\frac{2}{p'}-1}\Bigl( \sum_{|k|\geq 1} |c_{k}|^{p'}  \Bigr)^{\frac{1}{p'}} \nonumber \\
%&\leq   C\Bigl[ |v(\cdot,0)|_{\mathbb{H}^{s}_{m,T}} \Bigl( \sum_{|k|\geq 1} ((\omega^{2}|k|^{2}+m^{2})^{s})^{-\frac{p'}{2-p'}} 
%\Bigr)^{\frac{2-p'}{2p'}} \Bigr] \nonumber \\
%&\leq C' \Bigl( \sum_{|k|\geq 1} |k|^ {-\frac{2sp'}{2-p'}}  \Bigr)^{\frac{2-p'}{2p'}}  
%|v(\cdot,0)|_{\mathbb{H}^{s}_{m,T}} \nonumber \\
%&\leq C' \Bigl( \sum_{|k|\geq 1} |k|^ {-\frac{2sp'}{2-p'}}  \Bigr)^{\frac{2-p'}{2p'}} || v ||_{\mathbb{X}^{s}_{m,T}}
%\end{align}
%for some $C$ depending only on $ T, N, s$. We observe that, being $\frac{2N}{N+2s}<p'<2$, the last series in (\ref{series}) is finite. 
\begin{align*}
\Bigl(\sum_{|k|\geq 1} |c_{k}|^{q'}  \Bigr)^{\frac{1}{q'}}
&\leq   \Bigl[ |v(\cdot,0)|_{\mathbb{H}_{m}} \Bigl( \sum_{|k|\geq 1} ((\omega^{2}|k|^{2}+m^{2})^{s})^{-\frac{q'}{2-q'}} 
\Bigr)^{\frac{2-q'}{2q'}} \Bigr] \nonumber \\
&\leq \omega^{-s} \Bigl( \sum_{|k|\geq 1} |k|^ {-\frac{2sq'}{2-q'}}  \Bigr)^{\frac{2-q'}{2q'}}  
|v(\cdot,0)|_{\mathbb{H}_{m}} \nonumber \\
&\leq \frac{\omega^{-s} }{\sqrt{\kappa_{s}}}\Bigl( \sum_{|k|\geq 1} |k|^ {-\frac{2sq'}{2-q'}}  \Bigr)^{\frac{2-q'}{2q'}} || v ||_{\mathbb{X}_{m}}<\infty
\end{align*}
because of $1<\frac{2N}{N+2s}<q'<2$. \\
As a consequence, by using the Theorem of Hausdorff-Young-Riesz (see pages $101$-$102$ in \cite{Zy}) follows that
$$
|v(\cdot,0)|_{L^{q}(0,T)^{N}}\leq \Bigl(\frac{1}{\sqrt{T^{N}}}\Bigr)^{\frac{2}{q'}-1} \Bigl(\sum_{|k|\geq 1} |c_{k}|^{q'}  \Bigr)^{\frac{1}{q'}}
$$
and taking $q=p+1$ we have
\begin{align}\label{series}
|v(\cdot,0)|_{L^{p+1}(0,T)^{N}}&\leq \Bigl(\frac{1}{\sqrt{T^{N}}}\Bigr)^{\frac{2}{(p+1)'}-1}\Bigl(\sum_{|k|\geq 1} |c_{k}|^{(p+1)'}  \Bigr)^{\frac{1}{(p+1)'}}\nonumber \\
&\leq C'' ||v||_{\mathbb{X}_{m}}
\end{align}
for some $C'':=C''(T,N,s,p)>0$ independent of $m$.\\
Then, by using (\ref{F}) with $\displaystyle{0<\varepsilon<\frac{\omega^{2s}}{4}}$ and exploiting  (\ref{intr*}) and (\ref{series}), we can see that for every $\displaystyle{0<m<m_{0}=\frac{\omega^{2s}}{2}}$
\begin{align*}
\mathcal{J}_{m}(v)&=\frac{1}{2}\iint_{\mathcal{S}_{T}} y^{1-2s}(|\nabla v|^{2}+m^{2} v^{2}) \ dx dy-\frac{m^{2s}}{2}\int_{\partial^{0} \mathcal{S}_{T}} |v|^{2} dx -\int_{\partial^{0} \mathcal{S}_{T}} F(x,v) dx \\
%\mathcal{J}_{m}(v)&=\frac{1}{2}[||v||^{2}_{\mathbb{X}^{m}_{T}}-m|v(\cdot,0)|^{2}_{L^{2}(0,T)^{N}}] -\int_{(0,T)^{N}} F(x,v(x,0)) dx \\
&\geq \frac{1}{2} ||v||^{2}_{\mathbb{X}_{m}}-\Bigl(\frac{m}{2}+\varepsilon \Bigr) |v(\cdot,0)|_{L^{2}(0,T)^{N}}^{2} -C_{\varepsilon} |v(\cdot,0)|_{L^{p+1}(0,T)^{N}}^{p+1} \\
&\geq  \Bigl[\frac{1}{2}-\frac{1}{\omega^{2s}}\Bigl(\frac{m}{2}+\varepsilon \Bigr) \Bigr] ||v||^{2}_{\mathbb{X}_{m}} -C''_{\varepsilon} ||v||_{\mathbb{X}_{m}}^{p+1}    \\
& \geq \Bigl(\frac{1}{4}-\frac{\varepsilon}{\omega^{2s}}  \Bigr) ||v||^{2}_{\mathbb{X}_{m}} -C''_{\varepsilon}||v||_{\mathbb{X}_{m}}^{p+1} .
\end{align*}

\noindent
Set $\displaystyle{b:=\frac{1}{4}-\frac{\varepsilon}{\omega^{2s}}>0}$ and $\displaystyle{r:=\Bigl(\frac{b}{2C''_{b}}\Bigr)^{\frac{1}{p-1}}}$. Then, for every $v\in \mathbb{Z}_{m}$ such that $||v||_{\mathbb{X}_{m}}=r$
$$
\mathcal{J}_{m}(v) \geq  br^{2}-C''_{b}\, r^{p+1}= \frac{b}{2} \Bigl(\frac{b}{2C''_{b}} \Bigr)^{\frac{2}{p-1}}=:K_{1} 
$$
from which follows (\ref{cmlambda}).
Now, we show that there exists a positive constant $K_{2}>0$ independent of $m$ such that 
\begin{equation}\label{cmdelta}
\alpha_{m}\leq K_{2} \quad \mbox{ for all }  0<m<m_{0}. 
\end{equation}
\noindent
By using (\ref{einstein}) and $0<m<m_{0}$ we know that
$$
C_{1}|w(\cdot,0)|_{L^{2}(0,T)^{N}}^{2}\leq ||w||_{\mathbb{X}_{m}}^{2} \leq (C_{2}+m_{0}^{2}C_{3})|w(\cdot,0)|_{L^{2}(0,T)^{N}}^{2}
$$
where $w$ is the function defined in (\ref{defw}). \\
Set 
$$
z:=\frac{rw}{ ||w||_{\mathbb{X}_{m}}}.
$$
Recalling that $0<m<m_{0}$,
we can see that $(\ref{einstein2})$ in Section $4$ can be replaced by 
\begin{align}
||v||_{\mathbb{X}_{m}}^{2}&\leq \max\{m_{0}^{2s},1\} \max\{1,C_{2}+m_{0}^{2}C_{3} \} \{|y(\cdot,0)|_{L^{2}(0,T)^{N}}^{2}+\lambda^{2}|z(\cdot,0)|_{L^{2}(0,T)^{N}}^{2}\} \nonumber \\
&=:\bar{C}(m_{0},s) |v(\cdot,0)|_{L^{2}(0,T)^{N}}^{2} \nonumber
\end{align}
for any $v=y+\lambda z\in \mathbb{Y}_{m}\oplus \R_{+} z$.

Now, fix $A>\frac{\bar{C}(m_{0},s)}{2}$. By using (\ref{F*}) and $0<m<m_{0}$, we have for any $v=y+\lambda z\in \mathbb{Y}_{m}\oplus \R_{+} z$ 
\begin{align}
\mathcal{J}_{m}(v)&=\frac{1}{2}||v||_{\mathbb{X}_{m}}^{2}-\frac{m^{2s}}{2}|v(\cdot,0)|_{L^{2}(0,T)^{N}}^{2}-\int_{\partial^{0} \mathcal{S}_{T}} F(x,v) dx \nonumber\\
&\leq \frac{1}{2}||v||_{\mathbb{X}_{m}}^{2} -\frac{A}{\bar{C}(m_{0},s) }||v||_{\mathbb{X}_{m}}^{2}+B\, T^{N} \nonumber \\
&\leq B\, T^{N}=:K_{2}.
\end{align}
%provided that $A>\frac{\bar{C}(m_{0},s)}{2}$.\\
Therefore, taking into account  (\ref{zero}), (\ref{cmlambda}) and (\ref{cmdelta}) we deduce that 
\begin{align}\label{alpham}
K_{1} \leq \alpha_{m}\leq K_{2} \,  \mbox{ for every } \,  0<m<m_{0}.
\end{align}
Now, we show how to exploit this last information to pass to the limit in (\ref{R}) as $m\rightarrow 0$. 

Firstly we begin proving that for any $\delta>0$, holds the following inequality
\begin{align}\label{nash}
||v||_{L^{2}((0,T)^{N}\times (0,\delta),\xi^{1-2s})}^2 &\leq \frac{\delta^{2-2s}}{1-s} |v(\cdot,0)|_{L^{2}(0,T)^{N}}^{2}+\frac{\delta ^{2}}{2s} ||\partial_{\xi} v||_{L^{2}(\mathcal{S}_{T},\xi^{1-2s})}^{2}
\end{align}
for any $v\in \mathbb{X}_{m}$.

Fix $\delta>0$ and let $v\in C^{\infty}_{T}(\overline{\R^{N+1}_{+}})$ such that $||v||_{\mathbb{X}_{m}}<\infty$.
For any $x\in [0,T]^{N}$ and $\xi\in [0, \delta]$, we have
$$
v(x,\xi)=v(x,0)+\int_{0}^{\xi} \partial_{\xi} v(x,t) dt.
$$
By using $(a+b)^{2}\leq 2a^{2}+2b^{2}$ for all $a, b\geq 0$ we obtain
$$
|v(x,\xi)|^2 \leq 2  |v(x,0)|^{2}+2\Bigl(\int_{0}^{\xi}|\partial_{\xi} v(x,t)| dt\Bigr)^{2},
$$
and applying the H\"older inequality we deduce
\begin{equation}\label{vtii5}
|v(x,\xi)|^2 \leq 2 \Bigl[ |v(x,0)|^{2}+\Bigl(\int_{0}^{\xi} t^{1-2s}|\partial_{\xi} v(x,t)|^{2}dt\Bigr)\frac{\xi^{2s}}{2s}\,  \Bigr].
\end{equation}
Multiplying both members of (\ref{vtii5}) by $\xi^{1-2s}$ we have
\begin{equation}\label{vtii}
\xi^{1-2s}|v(x,\xi)|^2 \leq 2 \Bigl[ \xi^{1-2s}|v(x,0)|^{2}+\Bigl(\int_{0}^{\xi} t^{1-2s} |\partial_{\xi} v(x,t)|^{2}dt\Bigr)\frac{\xi}{2s} \Bigr].
\end{equation}
Integrating (\ref{vtii}) over $(0,T)^{N}\times (0,\delta)$ we have
\begin{align}\label{nash}
||v||_{L^{2}((0,T)^{N}\times (0,\delta),\xi^{1-2s})}^2 &\leq \frac{\delta^{2-2s}}{1-s} |v(\cdot,0)|_{L^{2}(0,T)^{N}}^{2}+\frac{\delta ^{2}}{2s} ||\partial_{\xi} v||_{L^{2}(\mathcal{S}_{T},\xi^{1-2s})}^{2}.
\end{align}
By density we get the desired result.\\
Secondly, we can observe that by the definition of the norm $||\cdot||_{\mathbb{X}_{m}}$ and by Theorem \ref{tracethm} follow that
\begin{equation}\label{1v}
||v_{m}||_{\mathbb{X}_{m}}\geq ||\nabla v_{m}||^{2}_{L^{2}(\mathcal{S}_{T},\xi^{1-2s})}
\end{equation}
and
\begin{align}\label{2v}
||v_{m}||_{\mathbb{X}_{m}}&\geq |v_{m}(\cdot,0)|_{\mathbb{H}_{m}}\geq [v_{m}(\cdot,0)]_{\mathbb{H}_{m}}.
\end{align}
Finally, we notice that (\ref{alpham}) implies that 
\begin{align*}
2\int_{\partial^{0}\mathcal{S}_{T}} F(x,v_{m}) dx&<2 K_{1}+m^{2s}|v_{m}(\cdot,0)|_{L^{2}(0,T)^{N}}^{2}+2\int_{\partial^{0}\mathcal{S}_{T}} F(x,v_{m}) dx\nonumber \\
&\leq ||v_{m}||^{2}_{\mathbb{X}_{m}}
\end{align*}
%\label{estimF}
and by applying (\ref{F*}) with $A=1$, we can deduce that
\begin{align}\label{vti7}
2|v_{m}(\cdot,0)|_{L^{2}(0,T)^{N}}^{2} -2B_{1}T^{N}dx\leq ||v_{m}||^{2}_{\mathbb{X}_{m}}.
\end{align}
Then, taking into account  (\ref{nash}), (\ref{1v}), (\ref{2v}), (\ref{vti7}), and Theorem \ref{compacttracethm}, it is enough to prove
\begin{equation}\label{vmfinite}
\limsup_{m\rightarrow 0} ||v_{m}||_{\mathbb{X}_{m}}<\infty
\end{equation}
to deduce the  existence of a subsequence, that for simplicity we will denote again with  $\{v_{m}\}$, and a function $v$ such that 
\begin{align}\label{limitsforv}
&v\in L^{2}_{loc}(\mathcal{S}_{T},\xi^{1-2s}) \mbox{ and } \nabla v\in L^{2}(\mathcal{S}_{T},\xi^{1-2s});\nonumber \\
&v_{m}\rightharpoonup v \mbox{ in } L^{2}_{loc}(\mathcal{S}_{T},\xi^{1-2s}) \mbox{ as } m\rightarrow 0; \\
&\nabla v_{m}\rightharpoonup \nabla v \mbox{ in } L^{2}(\mathcal{S}_{T},\xi^{1-2s}) \mbox{ as } m\rightarrow 0; \nonumber  \\ 
&v_{m}(\cdot,0)\rightarrow v(\cdot,0) \mbox{ in } L^{q}(0,T)^{N} \mbox{ for any } q\in \Bigl[2, \frac{2N}{N-2s}\Bigr), \mbox{ as } m\rightarrow 0. \nonumber 
\end{align}
To show the validity of (\ref{vmfinite}), we proceed as in the first part of the proof of Lemma \ref{lemma6} in which we demonstrated the boundedness of Cerami sequences.
\noindent
We assume by contradiction that, up to a subsequence, 
\begin{equation}\label{vminfty}
||v_{m}||_{\mathbb{X}_{m}}\rightarrow \infty \mbox{ as } m\rightarrow 0.
\end{equation}
\noindent
We set
\begin{equation}\label{defwm}
w_{m}:=\frac{v_{m}}{||v_{m}||_{\mathbb{X}_{m}}}.
\end{equation}
Then $||w_{m}||_{\mathbb{X}_{m}}=1$ and by using (\ref{1v})-(\ref{2v}) results
\begin{equation}\label{nablavti}
||\nabla w_{m}||^{2}_{L^{2}(\mathcal{S}_{T},\xi^{1-2s})}\leq 1
\end{equation}
and
\begin{align}\label{compik}
1=||w_{m}||^{2}_{\mathbb{X}_{m}}\geq C(T,N,s)[w_{m}(\cdot,0)]_{\mathbb{H}_{m}}.
\end{align}
Moreover, by using (\ref{vti7}), (\ref{vminfty}) and (\ref{defwm}), we have
\begin{equation}\label{vti}
\limsup_{m\rightarrow 0}|w_{m}(\cdot,0)|_{L^{2}(0,T)^{N}}^{2}\leq \frac{1}{2}.
\end{equation}
Putting together (\ref{compik}) and (\ref{vti}) we obtain
\begin{align}\label{caffarelli}
|w_{m}(\cdot,0)|_{\mathbb{H}}\leq C
\end{align}
for any $m$ sufficiently small.\\
Finally, by using (\ref{nash}) with $v=w_{m}$ and exploiting (\ref{nablavti}) and (\ref{vti}) we have 
\begin{align}\label{nirenberg}
\limsup_{m\rightarrow 0}||w_{m}||_{L^{2}((0,T)^{N}\times (0,\delta),\xi^{1-2s})}^2 
%&\leq \frac{\alpha^{2-2s}}{1-s} |w_{m}(\cdot,0)|_{L^{2}(0,T)^{N}}^{2}+\frac{\alpha ^{2}}{2s} ||\partial_{y} w_{m}||_{L^{2}(\mathcal{S}_{T},y^{1-2s})}^{2} \\
\leq  \frac{\delta^{2-2s}}{2(1-s)}+\frac{\delta^{2}}{2s}=:C(\delta,s).
\end{align}
Taking into account (\ref{nablavti}), (\ref{caffarelli}), (\ref{nirenberg}) and by using Theorem \ref{compacttracethm}, we can deduce the existence of a subsequence, which we will denote again with  $\{w_{m}\}$, and a function $w$ such that 
\begin{align}\label{limitsforw}
&w\in L^{2}_{loc}(\mathcal{S}_{T},\xi^{1-2s}) \mbox{ and } \nabla w\in L^{2}(\mathcal{S}_{T},\xi^{1-2s});\nonumber \\
&w_{m}\rightharpoonup w \mbox{ in } L^{2}_{loc}(\mathcal{S}_{T},\xi^{1-2s}) \mbox{ as } m\rightarrow 0; \\
&\nabla w_{m}\rightharpoonup \nabla w \mbox{ in } L^{2}(\mathcal{S}_{T},\xi^{1-2s}) \mbox{ as } m\rightarrow 0; \nonumber  \\ 
&w_{m}(\cdot,0)\rightarrow w(\cdot,0) \mbox{ in } L^{q}(0,T)^{N} \mbox{ for any } q\in \Bigl[2, \frac{2N}{N-2s}\Bigr), \mbox{ as } m\rightarrow 0. \nonumber 
\end{align}
Now we distinguish two cases.
Firstly we suppose that 
\begin{equation}\label{3.245}
w\equiv 0.
\end{equation}
As in Lemma \ref{lemma6}, we define $t_{m}\in [0,1]$ such that
\begin{equation}\label{3.255}
\mathcal{J}_{m}(t_{m}v_{m})=\max_{t\in [0,1]} \mathcal{J}_{m}(t v_{m}).
\end{equation}
%for some $t_{m}\in [0,1]$.\\
Set $r_{n}=2\sqrt{n}$ and note that $\displaystyle{\frac{r_{n}}{||v_{m}||_{\mathbb{X}_{m}}}\in (0,1)}$ for $m$ sufficiently small and for any $n\in \N$.
By $(f4)$, (\ref{limitsforw}) and (\ref{3.245}) easily follows that
\begin{equation}\label{3.305}
\int_{\partial^{0}\mathcal{S}_{T}}F(x,r_{n}w_{m}) dx\rightarrow 0 \mbox{ as } m\rightarrow 0
\end{equation}
for any $n\in \N$.
Then we can deduce that
$$
\mathcal{J}_{m}(t_{m}v_{m})\geq \mathcal{J}_{m}(r_{n}w_{m})\geq 2n-\int_{\partial^{0}\mathcal{S}_{T}} F(x,r_{n}w_{m}) dx\geq n
$$
provided $m$ is sufficiently small and for any $n\in \N$. 
As a consequence
\begin{equation}\label{3.315}
\mathcal{J}_{m}(t_{m}v_{m})\rightarrow \infty \mbox{ as } m\rightarrow 0.
\end{equation}
Since $\mathcal{J}_{m}(0)=0$ and $\mathcal{J}_{m}(v_{m})\in [K_{1}, K_{2}]$ we deduce that $t_{m}\in (0,1)$. Thus, by (\ref{3.255}) we have
\begin{equation}\label{3.325}
\langle{\mathcal{J}'_{m}(t_{m}v_{m}),t_{m}v_{m}}\rangle=t_{m} \frac{d}{dt}\Bigr|_{t=t_{m}} \mathcal{J}_{m}(t v_{m})=0.
\end{equation} 
Taking into account $(f6)$, (\ref{zero}), (\ref{alpham}) and (\ref{3.325}) we get
\begin{align*}
\frac{2}{\gamma}\mathcal{J}_{m}(t_{m}v_{m})&=\frac{2}{\gamma}\Bigl(\mathcal{J}_{m}(t_{m}v_{m})- \frac{1}{2}\langle{\mathcal{J}'_{m}(t_{m}v_{m}),t_{m}v_{m}}\rangle \Bigr)\\
&=\frac{1}{\gamma} \int_{\partial^{0}\mathcal{S}_{T}}G(x,t_{m}v_{m}) dx \\
&\leq \int_{\partial^{0}\mathcal{S}_{T}}G(x,v_{m}) dx \\
&=2\mathcal{J}_{m}(v_{m})-\langle{\mathcal{J}'_{m}(v_{m}),v_{m}}\rangle \leq 2 K_{2}
\end{align*}
which contradicts (\ref{3.315}).\\
Now we assume that
\begin{equation}\label{zdiv05}
w\not\equiv 0.
\end{equation}
Thus the set $\Omega=\{x\in (0,T)^{N}: w(x,0)\neq 0\}$ has positive Lebesgue measure and by using (\ref{defwm}), (\ref{limitsforw}) and (\ref{zdiv05}) we get
\begin{equation}\label{3.345}
|v_{m}(x,0)|\rightarrow \infty \mbox{ a.e. } x\in \Omega \mbox{ as } m\rightarrow 0.
\end{equation}
In particular, by $(f5)$, follows that 
\begin{align}\label{Finfty}
\frac{F(x, v_{m}(x,0))}{||v_{m}||^{2}_{\mathbb{X}_{m}}} &= \frac{F(x, v_{m}(x,0))}{|v_{m}(x,0)|^{2}} \frac{|v_{m}(x,0)|^{2}}{||v_{m}||^{2}_{\mathbb{X}_{m}}} \nonumber \\
&= \frac{F(x, v_{m}(x,0))}{|v_{m}(x,0)|^{2}} |w_{m}(x,0)|^{2} \rightarrow +\infty \mbox{ a.e. } x\in \Omega. 
\end{align}
Since 
$$
\frac{\mathcal{J}_{m}(v_{m})}{||v_{m}||^{2}_{\mathbb{X}_{m}}} \rightarrow 0 \mbox{ as } m\rightarrow 0,  
$$
using (\ref{Finfty}) and $F\geq 0$, we can deduce via the Fatou's Lemma that
\begin{equation}\begin{split}\label{3.355}
o(1)&=\frac{1}{2}-\frac{m^{2s}}{2}\frac{|v_{m}(\cdot,0)|_{L^{2}(0,T)^{N}}^{2}}{||v_{m}||^{2}_{\mathbb{X}_{m}}}-\int_{(0,T)^{N}}\frac{F(x,v_{m}(x,0))}{||v_{m}||^{2}_{\mathbb{X}_{m}}} dx  \\
&\leq \frac{1}{2}-\int_{\Omega}\frac{F(x,v_{m}(x,0))}{||v_{m}||^{2}_{\mathbb{X}_{m}}} dx \rightarrow -\infty \quad \mbox{ as } m\rightarrow 0
\end{split}
\end{equation}
that is a contradiction.
%as $m\rightarrow 0$. \\
%By using Fatou's Lemma and (\ref{Finfty}) we obtain
%\begin{align}\label{3.365}
%\int_{\Omega}\frac{F(x,v_{m}(x,0))}{||v_{m}||^{2}_{\mathbb{X}_{m}}} dx \rightarrow \infty
%\mbox{ as } m\rightarrow 0.
%\end{align} 
%which together with (\ref{3.355}) gives a contradiction.
%Hence, by using (\ref{vti7}) and (\ref{vmfinite}) we can see that
%\begin{align}\label{trvmfinite}
%|v_{m}(\cdot,0)|_{L^{2}(0,T)^{N}}\leq C_{1}
%\end{align}
%for any $m$ sufficiently small.
%Now, taking into account (\ref{vmfinite}) and (\ref{trvmfinite}), we can repeat the same arguments used just a moment ago  to estimate the sequence $\{ w_{m} \}$.
%In this way, (\ref{nablavti}), (\ref{compik}) and (\ref{nirenberg}) can be replaced by 
%$$
%||\nabla v_{m}||^{2}_{L^{2}(\mathcal{S}_{T},y^{1-2s})}\leq C_{2}, 
%$$
%$$
%[v_{m}(\cdot,0)]_{\mathbb{H}}\leq C_{3}
%$$
%and 
%$$
%||v_{m}||_{L^{2}((0,T)^{N}\times (0,\delta),\xi^{1-2s})}^2 
%\leq  \frac{\delta^{2-2s}}{(1-s)}C_{1}^{2}+\frac{\delta^{2}}{2s}C_{2}^{2}
%$$
%for any $m$ sufficiently small.
Then we can assume the  existence of  $\{v_{m}\}$ and $v$ verifying (\ref{limitsforv}).
%\begin{align}\label{limitsforv}
%&v\in L^{2}_{loc}(\mathcal{S}_{T},\xi^{1-2s}) \mbox{ and } \nabla v\in L^{2}(\mathcal{S}_{T},\xi^{1-2s});\nonumber \\
%&v_{m}\rightharpoonup v \mbox{ in } L^{2}_{loc}(\mathcal{S}_{T},\xi^{1-2s}) \mbox{ as } m\rightarrow 0; \\
%&\nabla v_{m}\rightharpoonup \nabla v \mbox{ in } L^{2}(\mathcal{S}_{T},\xi^{1-2s}) \mbox{ as } m\rightarrow 0; \nonumber  \\ 
%&v_{m}(\cdot,0)\rightarrow v(\cdot,0) \mbox{ in } L^{q}(0,T)^{N} \mbox{ for any } q\in \Bigl[2, \frac{2N}{N-2s}\Bigr), \mbox{ as } m\rightarrow 0. \nonumber 
%\end{align}
At this point, we prove that $v$ is a weak solution to (\ref{R'}).
%\begin{equation}\label{Ros}
%\left\{
%\begin{array}{ll}
%-\dive(\xi^{1-2s} \nabla v) =0 &\mbox{ in }\mathcal{S}_{T}:=(0,T)^{N} \times (0,\infty)  \\
%v_{| {\{x_{i}=0\}}}= v_{| {\{x_{i}=T\}}} & \mbox{ on } \partial_{L}\mathcal{S}_{T}:=\partial (0,T)^{N} \times [0,\infty) \\
%\frac{\partial v}{\partial \nu^{1-2s}}=f(x,v)   &\mbox{ on }\partial^{0}\mathcal{S}_{T}:=(0,T)^{N} \times \{0\}
%\end{array}.
%\right.
%\end{equation}
We proceed as in \cite{A3}.
Fix $\varphi \in \mathbb{X}$.
We know that $v_{m}$ satisfies 
\begin{equation}\label{spqr}
\iint_{\mathcal{S}_{T}} \xi^{1-2s}(\nabla v_{m} \nabla \eta+m^{2}v_{m}\eta) \; dxd\xi=\int_{\partial^{0} \mathcal{S}_{T}} [m^{2s}v_{m}+f(x,v_{m})]\eta  \; dx
\end{equation}
for every $\eta \in \mathbb{X}_{m}$.
Now, we introduce $\psi\in \mathcal{C}^{\infty}([0,\infty))$ defined as follows
\begin{equation}\label{xidef}
\left\{
\begin{array}{cc}
\psi=1 &\mbox{ if } 0\leq \xi\leq 1 \\
0\leq \psi \leq 1 &\mbox{ if } 1\leq \xi\leq 2 \\ 
\psi=0 &\mbox{ if } \xi\geq 2 
\end{array}.
\right.
\end{equation}
We set $\psi_{R}(\xi):=\psi(\frac{\xi}{R})$ for $R>1$. Then choosing $\eta=\varphi \psi_{R}\in \mathbb{X}_{m}$ in (\ref{spqr}) and taking the limit as $m\rightarrow 0$ we have 
\begin{equation}\label{limitR}
\iint_{\mathcal{S}_{T}} \xi^{1-2s}\nabla v \nabla (\varphi \psi_{R}) \; dxd\xi=\int_{\partial^{0} \mathcal{S}_{T}} f(x,v)\varphi \; dx.
\end{equation}
By passing to the limit in (\ref{limitR}) as $R\rightarrow \infty$ we deduce that $v$ verifies
$$
\iint_{\mathcal{S}_{T}} \xi^{1-2s} \nabla v \nabla \varphi \; dxd\xi-\int_{\partial^{0} \mathcal{S}_{T}} f(x,v)\varphi  \; dx=0 \quad \forall \varphi \in \mathbb{X}.
$$
%In particular $v(\cdot,0)\in \mathcal{C}^{0,\alpha}([0,T]^{N})$ for some $\alpha \in (0,1)$ (it is enough to proceed similarly as in \cite{A3} taking into account the estimates (\ref{2v})-(\ref{vmfinite})).
Finally we show that $v$ is not identically zero. 
%Now we show that $v$ is not identically zero. 
By using (\ref{zero}), (\ref{alpham}), $1<p<\frac{N+2s}{N-2s}$ , (\ref{f}) and (\ref{F}) with $\varepsilon=\frac{1}{4}$ we can see that
\begin{align}\label{lastpassage}
2K_{1} \leq 2\mathcal{J}_{m}(v_{m})&=2 \mathcal{J}_{m}(v_{m})- \langle\mathcal{J}'_{m}(v_{m}),v_{m}\rangle \nonumber \\
&= \int_{\partial^{0} \mathcal{S}_{T}}f(x,v_{m})v_{m}-2F(x,v_{m}) dx \nonumber \\
&\leq |v_{m}(\cdot,0)|_{L^{2}(0,T)^{N}}^{2}+(p+3)C_{\frac{1}{4}}|v_{m}(\cdot,0)|_{L^{p+1}(0,T)^{N}}^{p+1} \nonumber \\
&\leq T^{\frac{N(p-1)}{2(p+1)}}|v_{m}(\cdot,0)|_{L^{p+1}(0,T)^{N}}^{2}+(p+3)C_{\frac{1}{4}}|v_{m}(\cdot,0)|_{L^{p+1}(0,T)^{N}}^{p+1} 
\end{align}
where in the last inequality we have used H\"older inequality.
Taking the limit in (\ref{lastpassage}) as $m\rightarrow 0$ and recalling that $v_{m}(\cdot,0)$ converges strongly to $v(\cdot,0)$ in $L^{p+1}(0,T)^{N}$, we deduce that $|v(\cdot,0)|_{L^{p+1}(0,T)^{N}}>0$, that is $v\not \equiv 0$.

\begin{remark}
By exploiting the estimates (\ref{2v})-(\ref{vmfinite}), we can proceed similarly as in \cite{A3} to 
infer that $v(\cdot,0)\in \mathcal{C}^{0,\alpha}([0,T]^{N})$ for some $\alpha \in (0,1)$.
\end{remark}

\end{document}